\newcommand{\tr}{\top}
\newcommand{\Rset}{\mathbb{R}}
\newcommand{\Cset}{\mathbb{C}}
\newcommand{\Nset}{\mathbb{N}}
\newcommand{\Kset}{\mathbb{K}}
\newcommand{\cA}{\mathcal{A}}
\newcommand{\cC}{\mathcal{C}}
\newcommand{\cN}{\mathcal{N}}
\newcommand{\cM}{\mathcal{M}}
\newcommand{\cV}{\mathcal{V}}
\newcommand{\cH}{\mathcal{H}}
\newcommand{\cK}{\mathcal{K}}
\newcommand{\bs}{\mathbf{s}}
\newcommand{\col}{\operatorname{col}}
\newcommand{\Hx}{\mathcal{H}_x}
\newcommand{\Hu}{\mathcal{H}_u}
\newcommand{\Hprod}{\mathcal{H}}
\newcommand{\mux}{\mu_x}
\newcommand{\muu}{\mu_u}
\newcommand{\Sigx}{\Sigma_x}
\newcommand{\Sigu}{\Sigma_u}
\newcommand{\Ltwo}{L^2}
\newcommand{\norm}[1]{\left\|#1\right\|}
\newcommand{\inn}[2]{\left\langle #1,\, #2 \right\rangle}
\newcommand{\pushfwd}{F_*}
\newtheorem{theorem}{Theorem}[section]}
\newtheorem{proposition}[theorem]{Proposition}}
\newtheorem{corollary}[theorem]{Corollary}}
\newtheorem{definition}[theorem]{Definition}}
\newtheorem{remark}[theorem]{Remark}}
\newtheorem{assumption}[theorem]{Assumption}}
\def\BibTeX{{\rm B\kern-.05em{\sc i\kern-.025em b}\kern-.08em
    T\kern-.1667em\lower.7ex\hbox{E}\kern-.125emX}}
\begin{document}
\title{From Product Hilbert Spaces to the\\ Generalized Koopman Operator and the Nonlinear Fundamental Lemma}
\author{Mircea Lazar, \IEEEmembership{Senior Member, IEEE}
\thanks{M. Lazar is with the Control Systems Group, Electrical Engineering Department, Eindhoven University of Technology, De Groene Loper 19, 5612 AP, Eindhoven, The Netherlands (e-mail: m.lazar@tue.nl).}}

\maketitle

\begin{abstract}
The generalization of the Koopman operator to systems with control input and the derivation of a nonlinear fundamental lemma are two open problems that play a key role in the development of data-driven control methods for nonlinear systems. In this paper we derive a novel solution to these problems based on basis functions expansion in a product Hilbert space constructed as the tensor product between the Hilbert spaces of the state and input observable functions, respectively. We identify relaxed invariance conditions that guarantee existence of a bounded linear operator, i.e., the generalized Koopman operator, from the constructed product Hilbert space to the Hilbert space corresponding to the lifted state propagated forward in time. Compared to classical Koopman invariance conditions, measure preservation is not required. Moreover, we derive a nonlinear fundamental lemma by exploiting the constructed exact infinite-dimensional bilinear Koopman representation and Hankel operators. The effectiveness of the developed generalized Koopman embedding is illustrated on the Van der Pol oscillator and in predictive control of a soft-robotic manipulator model.
\end{abstract}

\begin{IEEEkeywords}
Data-driven modeling and control, Hilbert space, Koopman operator, Nonlinear dynamical systems, Willems' fundamental lemma.
\end{IEEEkeywords}

\section{Introduction}
\label{sec:1}
\IEEEPARstart{T}{he} development of data-driven analysis and control methods for dynamical systems is currently fueled by the merging of machine learning and control design methods, see, e.g., \cite{Koopman_book, Markovsky_survey}, which can potentially impact all control application domains in our society. The main challenge in developing data-driven analysis and control methods lies in dealing with the inherently complex and nonlinear dynamics of real-world systems.  Two of the most adopted approaches in data-driven analysis and control of nonlinear systems are the Koopman operator modeling approach \cite{Koopman1931, Koopman_book} and the behavioural modeling approach based on Willems' fundamental lemma \cite{WillemsRapisarda2005, Markovsky_survey}. In what follows, we provide a non-exhaustive overview of recent advances and open problems in these two areas and we show that they share a common challenge, which will be addressed in this paper.

\begin{figure}[!t]
\centerline{\includegraphics[width=0.7\columnwidth]{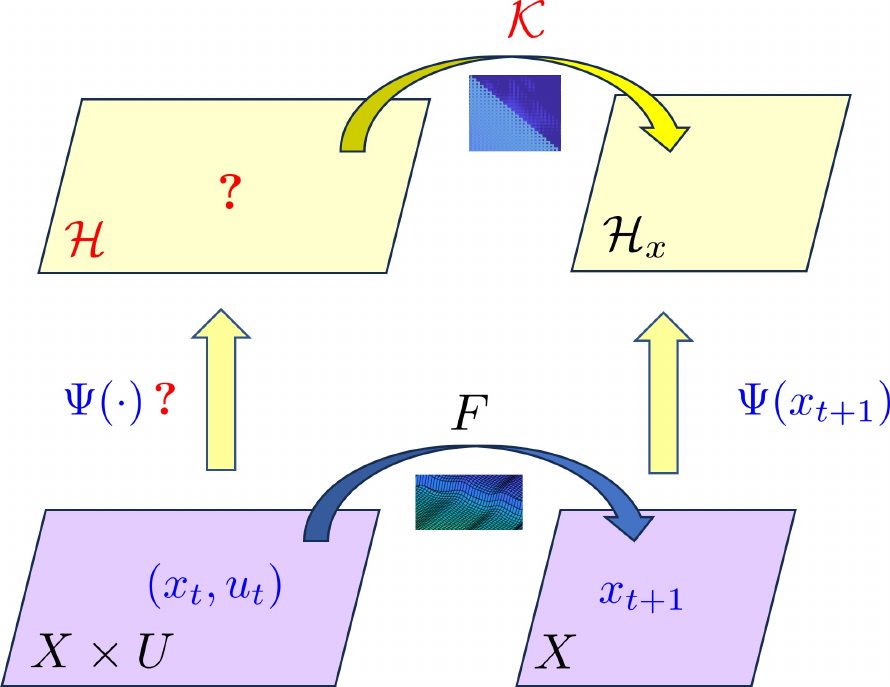}}
\caption{Illustration of the Koopman operator modeling approach for dynamical systems with control input.}
\label{fig1}
\end{figure}

The Koopman operator modeling approach \cite{Koopman1931, Koopman_book, Mezic_2021_review, Brunton_survey} constructs an infinite-dimensional linear representation of a nonlinear system that can be used for analysis or control. In the case of autonomous systems, that is, $x_{t+1}=F_0(x_t)$ with $F_0\,:\,X\rightarrow X$ a nonlinear map, this is typically done by means of a measure-preserving composition operator $(C_{F_0}\varphi)(x) = \varphi(F_0(x))$, i.e., the Koopman operator, which acts on observable functions $\varphi$ in a Hilbert space $\cH_x$. In practice, finite-dimensional approximations of the Koopman operator can be computed from data, see, e.g., the extended dynamic mode decomposition (EDMD) method \cite{Williams_EDMD_2015}. Recently, it was shown in \cite{Asada} that Koopman operators can be constructed using a direct encoding method based on orthonormal expansion in Hilbert spaces.
However, when the Koopman operator approach is applied to dynamical systems with control input (that is $x_{t+1}=F(x_t,u_t)$), as graphically illustrated in Figure~\ref{fig1}, a new problem emerges: how to construct the Hilbert space $\cH$ of observable functions on which the operator acts?

One of the first approaches to deriving a Koopman operator for systems with inputs originated in \cite{Korda_2018_Koop}, which augments the system state with an infinite-dimensional input sequence and constructs a measure-preserving composition operator for the augmented autonomous system. Therein it was shown how this approach can be used to derive approximate linear and bilinear Koopman representations. A similar approach was also employed in \cite{Brunto_Koopmanu}. Conditions for exact Koopman bilinearization (in the original control input) of input-affine nonlinear systems were derived in \cite{GoswamPaley2022, Bruder2020} and further researched, e.g., in \cite{Peitz_2024, strasser2026safedmd}. Other works derive a family of (autonomous) Koopman operators parameterized by the control input \cite{guo2025parametrickoopman, haseli2026modeling}, or derive linear-parameter-varying Koopman representations \cite{IACOB2024_Koopman}.

As an alternative to the Koopman modeling approach, Willems' fundamental lemma \cite{WillemsRapisarda2005, Markovsky_survey} aims at deriving a data-based system representation defined as a basis of recorded trajectories whose span coincides with all possible system trajectories under the assumption of persistency of excitation (PE). However, this result exploits the superposition principle, which is only valid for linear systems. Under a stronger PE condition, fundamental lemmas have been derived for state-space Hammerstein systems \cite{BerberichNL2020}, bilinear systems in \cite{Yuan_Cortes_Bilinear, Timm_bilinear} and input-output generalized bilinear systems in \cite{Markovsky_2023_Biliniear}. In \cite{Molodchyk_2024exploring, Lazar_SysDo_2024} it was shown that the fundamental lemma can be equivalently formulated using kernel functions and reproducing kernel Hilbert spaces (RKHS). Similarly as in the Koopman modeling approach, the key challenge lies in the construction of lifting basis functions such that in the lifted (Hilbert) space there exists a possibly infinite-dimensional, linear data-based system representation.

Motivated by the above state-of-the-art in Koopman-based and data-driven behavioral modeling of nonlinear systems, in this paper we develop a novel theoretical framework that uses \emph{product Hilbert spaces} for generalizing the Koopman operator and Willems' fundamental lemma to nonlinear systems with inputs. The \emph{main contributions} are as follows:
\begin{itemize}
\item A well-defined generalized Koopman composition operator for systems with inputs that acts on observable functions in a Hilbert space $\cH_x$ defined on the state-space $X$ and yields functions in the tensor product $\cH:=\cH_x\otimes\cH_u$ of Hilbert spaces defined on the state-space $X$ and input-space $U$, respectively;
 \item A coordinate representation of the generalized Koopman operator in Riesz bases as an operator $\cK : \ell^2\rightarrow \ell^2$ that yields an exact infinite-dimensional bilinear representation of the original nonlinear system;
\item A nonlinear fundamental lemma based on the infinite-dimensional bilinear Koopman representation (extended to systems with measured outputs) and Hankel operators.
\end{itemize}
The remainder of the paper is organized as follows. Preliminaries are introduced in  Section~\ref{sec:2}. The generalized Koopman operator based on product Hilbert spaces is presented in Section~\ref{sec:3}. The nonlinear fundamental lemma is derived in Section~\ref{sec:4}. Illustrative examples are shown in Section~\ref{sec:5} and conclusions are summarized in Section~\ref{sec:6}.

\section{Preliminaries}
\label{sec:2}
Let $\Cset$, $\Rset$ and $\Nset$ denote the sets of complex, real and natural numbers, respectively. Let $\Kset$ denote either $\Cset$ or $\Rset$. For  $q\in\Nset\cup\{\infty\}$ vectors (or vector-valued functions) $\{v_1,\ldots,v_q\}$, define $\col(v_1,\ldots,v_q):=[v_1^\tr,\ldots,v_q^\tr]^\tr$. For two matrices $A\in\Rset^{m\times n}, B\in\Rset^{p\times q}$, $A\otimes B\in\Rset^{mp\times nq}$ denotes their Kronecker product. For two vectors $a\in\Rset^n, b\in\Rset^m$, $a\otimes b\in\Rset^{nm}$ denotes their Kronecker vector product, i.e., for $a=[a_1\, a_2]^\top$, $b=[b_1\, b_2]^\top$, $a\otimes b=[a_1b_1\, a_1b_2 \, a_2b_1 \, a_2b_2]^\top$. For two matrices $A\in\Rset^{n\times m}$, $B\in\Rset^{p\times m}$, $A\odot B\in\Rset^{np\times m}$ denotes their Khatri-Rao product, which consists of the column-wise Kronecker product. For a full-row rank matrix $A\in\Rset^{m\times n}$, $A^\dagger=A^\top(AA^\top)^{-1}$ denotes its right Moore-Penrose inverse. For any sequence $\bs:=\{s_0,s_1,\ldots\}$ with $s_i\in\Rset^{n_s}$, define the truncated augmented vector $\bs_{[i,j]}:=\col(s_i,\ldots,s_j)$ for $0\leq i < j$.
For a set $X\subseteq\Rset^n$ and $N\in\Nset_{\geq 2}$, $X^N$ denotes the $N$-times Cartesian product $X\times\ldots\times X$. The \emph{Kronecker delta} $\delta_{ik}$ for indices
$i, k \in \mathbb{N}$ is defined as
\[  \delta_{ik}
  \;:=\;
  \begin{cases}
    1 & \text{if } i = k, \\
    0 & \text{if } i \neq k.
  \end{cases}
\]
Let $(X, \Sigx, \mux)$ and $(U, \Sigu, \muu)$ be $\sigma$-finite
measure spaces with $X \subseteq \mathbb{R}^n$ and
$U \subseteq \mathbb{R}^m$, and let
$(X \times U,\, \Sigx \otimes \Sigu,\, \mux \otimes \muu)$
denote their product measure space~\cite{Rudin1987}.

\begin{definition}
\label{def:pushfwd}
Let $F : X \times U \to X$ be a measurable map. The
\emph{pushforward} of $\mux \otimes \muu$ under $F$ yields the measure
$\pushfwd(\mux \otimes \muu)$ on $(X, \Sigx)$ defined by
\begin{equation}
  \bigl[\pushfwd(\mux \otimes \muu)\bigr](A)
  \;:=\;
  (\mux \otimes \muu)\bigl(F^{-1}(A)\bigr),
  \quad \forall A \in \Sigx,
\end{equation}
where $F^{-1}(A) := \{(x,u) \in X \times U : F(x,u) \in A\}
\in \Sigx \otimes \Sigu$ is the preimage of $A$ under $F$.
\end{definition}
In this paper we will follow standard notation for function
spaces $L^p(X,\mu_x)$, see, e.g.,~\cite[Chapter~3]{Rudin1987}.
\begin{assumption}
\label{ass:nonsing}
For every $A \in \Sigx$, let $\nu(A): = (\mux \otimes \muu)(F^{-1}(A))$ denote the pushforward measure through $F$. The pushforward measure $\nu$ through $F$ is absolutely continuous with respect to the measure $\mu_x$ (denoted by $\nu \ll \mux$), i.e., it holds that $\mux(A) = 0  \;\implies\;  \nu(A) = 0$.
\end{assumption}
The above assumption states that $F$ does not map sets of positive
$(\mux \otimes \muu)$-measure in $X \times U$ into
$\mux$-null sets in $X$. Then, since $(X, \Sigx, \mux)$ is
$\sigma$-finite and $\nu \ll \mux$, by the
\emph{Radon-Nikodym theorem}~\cite[Theorem~6.10]{Rudin1987} there
exists a unique function $w \in L^1(X, \mux)$, $w \geq 0$
$\mux$-a.e., called the \emph{Radon-Nikodym derivative}
$w := d\nu/d\mux$, such that
\begin{equation}
  \nu(A) \;=\; \int_A w(x')\, d\mux(x'),
  \qquad \forall\, A \in \Sigx.
  \label{eq:RN}
\end{equation}
\begin{assumption}
\label{ass:bounded}
The equality \eqref{eq:RN} holds with a function $w \in L^\infty(X, \mux)$, i.e.,
$M \;:=\; \norm{w}_{L^\infty(X,\mux)} \;<\; \infty$.
\end{assumption}
Assumption~\ref{ass:bounded} requires that $\mux$-a.e., the preimage $F^{-1}(A)$ is bounded relative to $\mux(A)$
uniformly over  measurable $A \in \Sigx$.

We will extensively use $L^2(X,\mu_x)$ and $\ell^2$ spaces, for which we provide detailed definitions in what follows. Let
\begin{equation*}
\begin{split}
    \ell^2 := \{ (x_n)_{n \geq 1} : \sum_{n=1}^{\infty} |x_n|^2 < \infty \},\,
     \|(x_n)\|_{\ell^2} := ( \sum_{n=1}^{\infty} |x_n|^2 )^\frac{1}{2}.
     \end{split}
\end{equation*}
For $X\subseteq\Rset^n$ define $\mathcal{H}_x := L^2(X, \mu_x)$ as
\begin{align*}
  \mathcal{H}_x = \{ \varphi : X \to \mathbb{K}\ \text{measurable}
  \;|\;
  \int_X |\varphi(x)|^2\, d\mu_x(x) < \infty \},
\end{align*}
where functions agreeing $\mu_x$-almost everywhere are
identified, i.e.\ $\varphi_1 = \varphi_2$ in
$\mathcal{H}_x$ iff $\mu_x(\{x : \varphi_1(x) \neq
\varphi_2(x)\}) = 0$.

Furthermore, define the inner product and norm as
\begin{align*}
  \inn{\varphi_1}{\varphi_2}_{\Hx}
  \;&:=\; \int_X \varphi_1(x)\,\overline{\varphi_2(x)}\, d\mux(x),
  \qquad\\
  \norm{\varphi}_{\Hx}
  \;&:=\; \left(\int_X |\varphi(x)|^2\, d\mux(x)\right)^{\!1/2}.
\end{align*}

Let a corresponding Hilbert space on $U\subseteq\Rset^m$, i.e., $\cH_u:=L^2(U,\mu_u)$, be defined similarly. Then we define the corresponding \emph{product Hilbert space} as
\begin{equation}
  \Hprod \;:=\; \Ltwo(X \times U,\, \mux \otimes \muu)
\end{equation}
with inner product and norm
\begin{align}
  \inn{f_1}{f_2}_{\Hprod}
  &\;:=\; \int_{X \times U} f_1(x,u)\,\overline{f_2(x,u)}\,
          d(\mux \otimes \muu), \nonumber\\
  \norm{f}_{\Hprod}
  &\;:=\; \left(\int_{X \times U} |f(x,u)|^2\,
          d(\mux \otimes \muu)\right)^{\!1/2}.\label{eq:3:2:i}
\end{align}

\begin{definition}
\label{def:riesz_basis}
A sequence $\{\varphi_i\}_{i=1}^\infty$ in a Hilbert space
$\mathcal{H}$ is an \emph{orthonormal basis} (ONB) if
$\langle\varphi_i,\varphi_k\rangle_{\mathcal{H}} = \delta_{ik}$
and $\overline{\operatorname{span}}\{\varphi_i\} = \mathcal{H}$.
It is a \emph{Riesz basis} if there exist constants
$0 < \alpha \leq \beta < \infty$ such that for all
$(c_i)_{i\geq 1} \in \ell^2$:
\begin{equation}
  \alpha\sum_{i=1}^\infty|c_i|^2
  \;\leq\;
  \Bigl\|\sum_{i=1}^\infty c_i\,\varphi_i\Bigr\|_{\mathcal{H}}^2
  \;\leq\;
  \beta\sum_{i=1}^\infty|c_i|^2.
  \label{eq:riesz_def}
\end{equation}
\end{definition}
Note that every ONB is a Riesz basis with $\alpha=\beta=1$ and
a Riesz basis is a bounded and boundedly invertible image
of an ONB under a bounded linear
operator~\cite[Theorem~3.6.6]{Christensen2016}.

Furthermore, observe that the tensor product Hilbert space $\Hx\otimes\Hu$ is
isometrically isomorphic to the product space $\Hprod$:
\begin{equation}
  L^2(X,\mux)\otimes L^2(U,\muu)
  \;\cong\;
  L^2(X\times U,\,\mux\otimes\muu),
  \label{eq:tensor_iso}
\end{equation}
via the identification $(\varphi\otimes\psi)(x,u) :=
\varphi(x)\psi(u)$, extended by linearity and
continuity~\cite[Theorem~II.10]{ReedSimon1980}.

The following result follows directly from standard properties of product spaces and Riesz bases \cite[Section~II.4]{ReedSimon1980}.
\begin{proposition}
\label{prop:Hilbert}
If $\{\psi_{i,x}\}_{i=1}^\infty$ is a Riesz
basis for $\Hx$ with bounds $\alpha_x,\beta_x$ and
$\{\psi_{i,u}\}_{i=1}^\infty$ is a Riesz basis for $\Hu$
with bounds $\alpha_u,\beta_u$, then the product functions
\begin{equation}
  \psi_{(i,l)}(x,u) \;:=\; \psi_{i,x}(x)\,\psi_{l,u}(u),
  \qquad i,l\in\mathbb{N},
  \label{eq:product_basis}
\end{equation}
form a Riesz basis for $\Hprod$ with bounds
$\alpha_x\alpha_u \leq \beta_x\beta_u$.
\end{proposition}

\section{The generalized Koopman operator}
\label{sec:3}
For discrete-time autonomous nonlinear systems,
\[x_{t+1}=F_0(x_t), \quad t\in\Nset,\]
with the map $F_0:X\rightarrow X$ Lebesgue integrable, the classical Koopman composition operator \cite{Koopman1931} is defined as
\[C_{F_0} : \Hx \to \Hx,\quad (C_{F_0}\varphi)(x) = \varphi(F_0(x)),\]
where $\varphi\;:\;X\rightarrow\Kset$ is called an observable (function). If $F_0$ is \emph{measure preserving} and bijective, the invariance property \begin{equation}
\label{eq:classic_inv}
\varphi\circ F_0\in\cH_x,\quad\forall\varphi\in\cH_x
\end{equation} holds, which guarantees that the Koopman composition operator is a well-defined bounded linear unitary operator.

In this paper we consider nonlinear dynamical systems with control input, i.e.,
\begin{equation}
\label{eq:3:1}
x_{t+1}=F(x_t,u_t),\quad t\in\Nset,
\end{equation}
where $F:X\times U\rightarrow X$ is a Lebesgue integrable map and $X\subseteq\Rset^n$, $U\subseteq\Rset^m$ denote the state space and the input space.

In what follows we adopt a product Hilbert space approach to define a generalized Koopman composition operator as
\begin{equation}
  C_F : \Hx \to \Hprod,
  \qquad
  (C_F \varphi)(x,u) \;:=\; \varphi(F(x,u)),
  \label{eq:CF}
\end{equation}
where $\cH=\cH_x\otimes\cH_u$ is the product Hilbert space of the state and input Hilbert spaces, respectively. Note that now the generalized Koopman composition operator acts on observable functions $\varphi\in L^2(X,\mu_x)$ and yields functions  $\varphi\circ F\in L^2(X\times U,\mu_x\otimes\mu_u)$. As such, this yields a novel, \emph{generalized invariance} condition, i.e.,\begin{equation}
\label{eq:gen_inv}
\varphi\circ F \in \cH=\cH_x\otimes\cH_u, \quad \forall \varphi\in\cH_x.
\end{equation}

Conditions on the map $F$ that yield a well-defined generalized Koopman composition operator are given next.

\begin{theorem}
\label{thm:bounded_CF}
Let $(X, \Sigx, \mux)$ and $(U, \Sigu, \muu)$ be $\sigma$-finite
measure spaces and let $F : X \times U \to X$ be measurable. Define
the pushforward $\nu := \pushfwd(\mux \otimes \muu)$ on $(X, \Sigx)$
as in Definition~\ref{def:pushfwd} and suppose Assumptions~\ref{ass:nonsing} and~\ref{ass:bounded} hold. Then the
\emph{generalized Koopman composition operator}
\begin{equation*}
  C_F : \Hx \to \Hprod,
  \qquad
  (C_F \varphi)(x,u) \;=\; \varphi(F(x,u)),
  \end{equation*}
is a well-defined \emph{bounded linear operator} with operator norm
\begin{equation*}
  \norm{C_F}_{\Hx\to\mathcal{H}}
  \;:=\; \sup_{\substack{\varphi \in \Hx \\ \norm{\varphi}_{\Hx}=1}}
         \norm{C_F \varphi}_{\Hprod}
  \;\leq\; M^{1/2}.
  \end{equation*}
\end{theorem}
\begin{proof}
\emph{Linearity:} For $\varphi_1, \varphi_2 \in \Hx$ and
$\alpha, \beta \in \mathbb{K}$:
\begin{align*}
  (C_F(\alpha\varphi_1 &+ \beta\varphi_2))(x,u)
  = (\alpha\varphi_1 + \beta\varphi_2)(F(x,u)) \\
  &= \alpha\varphi_1(F(x,u)) + \beta\varphi_2(F(x,u)),\\
  &=\alpha (C_F\varphi_1)(x,u) + \beta (C_F\varphi_2)(x,u).
\end{align*}
\emph{Boundedness:} Since $F$ is measurable and $\varphi$ is
measurable, the composition $\varphi \circ F$ is measurable on
$(X \times U, \Sigx \otimes \Sigu)$. For any $\varphi \in \Hx$, one can then apply the
\emph{change-of-variables formula} for pushforward
measures~\cite[eq. (1.1)]{Villani2009}, i.e., if $T:(Y,\mathcal{B})
\to (Z,\mathcal{C})$ is measurable and $\lambda$ is a measure on
$\mathcal{B}$, then
$\int_Z f\, d(T_*\lambda) = \int_Y f \circ T\, d\lambda$
for any non-negative measurable $f$. Here $T_*\lambda$ denotes the \emph{pushforward} of the measure $\lambda$ under the map $T$, defined by
$(T_*\lambda)(A) := \lambda(T^{-1}(A))$ for measurable
sets $A$ (cf.\ Definition~\ref{def:pushfwd}). In our setting, $Y = X \times U$ with $\lambda = \mux \otimes \muu$, and $Z = X$ with $T_*\lambda = \nu$ the pushforward measure on $X$. Then the change-of-variables formula with $T = F$ and
$f = |\varphi|^2 \geq 0$ yields
\begin{align*}
  \norm{C_F \varphi}_{\cH}^2
  &= \int_{X \times U} |(\varphi \circ F)(x,u)|^2\,
    d(\mux \otimes \muu)
  \nonumber\\
  &= \int_{X} |\varphi(x')|^2\, d\nu(x').
  %\label{eq:cov}
  \end{align*}
Substituting $d\nu = w\, d\mux$ from~\eqref{eq:RN} further yields
\begin{equation*}
  \norm{C_F \varphi}_{\Hprod}^2= \int_X |\varphi(x')|^2\, w(x')\, d\mux(x').
  %\label{eq:RNsub}
\end{equation*}
Applying Assumption~\ref{ass:bounded}, $w(x') \leq M$
for $\mux$-a.e.\ $x'$ gives
\begin{equation*}
 \norm{C_F \varphi}_{\Hprod}^2 \leq M \int_X |\varphi(x')|^2\, d\mux(x')
  \;=\; M \norm{\varphi}_{\Hx}^2 \;<\; \infty.
  %\label{eq:final_bound}
\end{equation*}
Taking the supremum over $\norm{\varphi}_{\Hx} = 1$ yields
$\norm{C_F}_{\Hx\to\mathcal{H}} \leq M^{1/2}$.
\end{proof}

Theorem~\ref{thm:bounded_CF} establishes that under
Assumptions~\ref{ass:nonsing} and~\ref{ass:bounded},
$C_F: \Hx \to \Hprod$ is bounded with
$\|C_F\| \leq M^{1/2}$, which implies
$\varphi \circ F \in \Hprod$ for every
$\varphi \in \Hx$ since
$\|\varphi\circ F\|_{\Hprod}^2
= \|C_F\varphi\|_{\Hprod}^2
\leq M\|\varphi\|_{\Hx}^2 < \infty$.
This yields the generalized invariance
condition~\eqref{eq:gen_inv}. A few remarks are in order.
\begin{remark}[Relation with the Koopman operator]
The classical~\cite{Koopman1931} \emph{Koopman invariance condition}~\eqref{eq:classic_inv}
requires $\mux$ to be \emph{$F_0$-invariant}, i.e.,
$ \mux(A) = \mux(F_0^{-1}(A))$, for all $A \in \Sigx$, which is exactly $(F_0)_*\mux = \mux$. This further implies
\begin{equation*}
  w_0 \;:=\; \frac{d\,(F_0)_*\mux}{d\mux} \;\equiv\; 1
  \quad \mux\text{-a.e.}
\end{equation*}
Then by the change-of-variables formula
and $(F_0)_*\mux = \mux$:
\begin{align*}
  \norm{C_{F_0}\varphi}_{\Hx}^2
  &= \int_X |(\varphi\circ F_0)(x)|^2\,d\mux(x)  \\
  &= \int_X |\varphi(x')|^2\,d[(F_0)_*\mux](x')\\
  &= \int_X |\varphi(x')|^2\,d\mux(x') \;=\; \norm{\varphi}_{\Hx}^2.
\end{align*}
Thus $C_{F_0}$ is an \emph{isometry} with $\norm{C_{F_0}}_{\Hx\to\Hx} = 1$.
If additionally $F_0$ is bijective and measure-preserving, then $(F_0^{-1})_*\mux = \mux$ holds automatically and
$C_{F_0}$ is \emph{unitary}, recovering the classical setting of~\cite{Koopman1931}, which was also employed in~\cite{Korda_2018_convergence, Korda_2018_Koop}. Theorem~\ref{thm:bounded_CF} generalizes this by allowing for $X\times U$ as the domain of $F$ and deriving a \emph{non-measure preserving} bounded composition operator.
The steps taken in relaxing the operator conditions  are summarized in the following  in
Table~\ref{tab:comparison}, where $\nu_0 := (F_0)_*\mux$ and
$w_0 := d\nu_0/d\mux$.
\begin{table}[h]
\renewcommand{\arraystretch}{1.4}
\caption{Comparison of Koopman Operator Settings}
\label{tab:comparison}
\centering
\begin{tabular}{p{2.6cm} p{1.7cm} p{2.5cm}}
\toprule
\textbf{Condition on $F$} & \textbf{R.-N.\ deriv.} & \textbf{Operator property} \\
\midrule
$F_0\!:\!X\!\to\! X$, meas.-pres., bijective
  & $w_0 \!\equiv\! 1$
  & $C_{F_0}\!:\!\Hx\!\to\!\Hx$ unitary, $\norm{C_{F_0}}\!=\!1$ \\
$F_0\!:\!X\!\to\! X$, meas.-pres., non-bijective
  & $w_0 \!\equiv\! 1$
  & $C_{F_0}\!:\!\Hx\!\to\!\Hx$ isometry, $\norm{C_{F_0}}\!=\!1$ \\
$F_0\!:\!X\!\to\! X$, non-meas.-pres., $w_0\!\in\! L^\infty$
  & $0\!\leq\! w_0\!\leq\! M$
  & $C_{F_0}\!:\!\Hx\!\to\!\Hx$ bounded, $\norm{C_{F_0}}\!\leq\! M^{1/2}$ \\
$F\!:\!X\!\times\! U\!\to\! X$, $w\!\in\! L^\infty$
  & $0\!\leq\! w\!\leq\! M$
  & $C_F\!:\!\Hx\!\to\!\Hprod$ bounded, $\norm{C_F}\!\leq\! M^{1/2}$ \\
\bottomrule
\end{tabular}
\end{table}
\end{remark}
Note that the use of weighted $L^p$ spaces to establish
boundedness of the composition operator for autonomous systems via
the Radon-Nikodym derivative was previously employed in~\cite{BreitenHoveler2023},
in the context of operator Lyapunov equations.
\begin{remark}[Implications for $F$]
\label{rem:classes}
Assumptions~\ref{ass:nonsing}--\ref{ass:bounded} hold whenever
$J_F(x,u) := |\det(\partial F/\partial x)(x,u)| \geq c > 0$
a.e.\ on $X\times U$ for some constant $c>0$, so that
$F_u := x\mapsto F(x,u)$ is a diffeomorphism on $X$ for each
fixed $u\in U$, with inverse $F_u^{-1}$. Taking $\mu_x$ as the
Lebesgue measure on $X\subseteq\Rset^n$ and applying Fubini's
theorem and the diffeomorphism change-of-variables
formula~\cite[Thm.~7.26]{Rudin1987} gives
\begin{align*}
  \nu(A)
  &= \int_{U}\mu_x\bigl(\{x: F_u(x)\in A\}\bigr)\,d\mu_u(u)\\
  &= \int_{U}\int_A \frac{d\mu_x(x')}{J_F(F_u^{-1}(x'),u)}\,d\mu_u(u)  \leq \frac{\mu_u(U)}{c}\,\mu_x(A),
\end{align*}
for all $A\in\Sigma_x$ ($\mu_x(A) := \int_A d\mu_x$ and similarly for $\mu_u(U)$), where the first equality applies Fubini, the second applies the
change of variables $x' = F_u(x)$, and the inequality uses
$J_F \geq c$. It follows immediately that $\mu_x(A)=0 \Rightarrow \nu(A)=0$,
i.e.\ $\nu\ll\mu_x$ (Assumption~\ref{ass:nonsing}).
By the Radon--Nikodym theorem~\cite[Theorem~6.10]{Rudin1987},
there exists $w = d\nu/d\mu_x \in L^1(X,\mu_x)$ such
that $\nu(A) = \int_A w\,d\mu_x$ for all $A\in\Sigma_x$, and
the above inequality implies $w(x') \leq c^{-1}\mu_u(U)$
$\mu_x$-a.e., hence
$M := \|w\|_{L^\infty} \leq c^{-1}\mu_u(U) < \infty$
(Assumption~\ref{ass:bounded}). The condition $J_F \geq c > 0$ is verifiable. For example, for an Euler discretisation, i.e., $F(x,u): = x + T_sf(x,u)$ with $f(x,u)$ the continuous-time system map, one has
$J_F = |\det(I + T_s\,\partial f/\partial x)|$,
and if $\|\partial f/\partial x\|_{\mathrm{op}} \leq L$
uniformly on $X\times U$ with $T_s < 1/L$, then by the
bound $|\det(I+T_sQ)|\geq(1-T_s\|Q\|_{\mathrm{op}})^n$
one obtains $J_F \geq (1-LT_s)^n =: c > 0$. Here $\|\cdot\|_{\mathrm{op}}$ denotes
the matrix operator norm (largest singular value). In the classical autonomous case, $F_0$ is measure-preserving,
which corresponds to $J_F \equiv 1$, a strictly stronger condition
than $J_F \geq c > 0$. The latter permits bounded volume distortion
but excludes volume collapse, and is sufficient to guarantee that
$C_F$ is a bounded operator as shown in
Theorem~\ref{thm:bounded_CF}.
\end{remark}
\subsection{Coordinate representation in Riesz bases}
Having established that $C_F : \Hx \to \mathcal{H}$ is a
bounded operator under Assumptions~\ref{ass:nonsing}
--\ref{ass:bounded}, next we will construct its coordinate representation in an orthonormal/Riesz basis as an operator
$\cK : \ell^2 \to \ell^2$. This will facilitate the derivation of an exact infinite-dimensional bilinear Koopman model for general nonlinear maps $F$. To this aim, consider the sets of $\Kset$-valued observable functions  $\{\psi_{i,x}\}_{i=1}^\infty$, $\{\psi_{j,u}\}_{j=1}^\infty$  with $\psi_{i,x}\in\cH_x$, $\psi_{j,u}\in\cH_u$ for all $i,j\in\Nset$. Furthermore, we construct lifted state and input vectors, i.e., $\Psi_x(x_t)$, and $\Psi_u(u_t)$, by stacking the corresponding observable functions, e.g., $\Psi_x(x_t)=\col(\psi_{1,x}(x_t),\psi_{2,x}(x_t),\ldots )$.

Next, define $\Psi(x,u):=\Psi_x(x)\otimes\Psi_u(u)$ with $\Psi(x,u)=\col(\psi_1(x,u),\psi_2(x,u),\ldots)$ and
\begin{equation}
\label{eq:3:basis}
\{\psi_1,\psi_2,\ldots\}:=\{\psi_{1,x}\psi_{1,u},\psi_{1,x}\psi_{2,u},\ldots,\psi_{i,x}\psi_{j,u},\ldots \},
\end{equation}
i.e., $\{\psi_l\}_{l=1}^\infty$ contains products $\{\psi_{i,x}\psi_{j,u}\}$, $i,j\in\Nset$, which is a basis of the tensor product $\cH_x\otimes\cH_u$ cf. Propostion~\ref{prop:Hilbert}.
\begin{proposition}
\label{prop:1}
Let $\{\varphi_{i}\}_{i=1}^\infty$ and $\{\psi_{i,x}\}_{i=1}^\infty$ be orthonormal bases in $\cH_x$ and let $\{\psi_{i,u}\}_{i=1}^\infty$ be an orthonormal basis in $\cH_u$. Let the state transition map $F : X\times U\rightarrow X$ in \eqref{eq:3:1} satisfy:
\begin{equation}
\label{eq:3:prop1}
\varphi_{i}\circ F\in\cH=\cH_x\otimes\cH_u,\quad i\in\Nset.
\end{equation}
Then there exists an infinite-dimensional matrix $\cK$ such that
\begin{equation}
\label{eq:3:3}
\Psi_x[F(x,u)]=\cK\Psi(x,u)=\cK\left(\Psi_x(x)\otimes\Psi_u(u)\right),
\end{equation}
with $\Psi_x[F(x,u)]=\col(\psi_{1,x}[F(x,u)],\psi_{2,x}[F(x,u)],\ldots)$ and
\begin{equation*}
\begin{split}
&\cK=\\
&\sum_{j=1}^\infty \begin{bmatrix}\langle\psi_{1,x},\varphi_{j}\rangle_{\cH_x}\\\langle\psi_{2,x},\varphi_{j}\rangle_{\cH_x}\\\vdots\end{bmatrix}\begin{bmatrix}\langle\varphi_{j}\circ F,\psi_1\rangle_{\cH} & \langle\varphi_{j}\circ F,\psi_2\rangle_{\cH}& \ldots\end{bmatrix}.
\end{split}
\end{equation*}
\end{proposition}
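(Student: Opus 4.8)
The plan is to turn the operator identity \eqref{eq:3:3} into a pair of orthonormal expansions and then read off the entries of $\cK$ one row at a time. The enabling structural fact is Proposition~\ref{prop:Hilbert}: since $\{\psi_{i,x}\}_{i=1}^\infty$ and $\{\psi_{j,u}\}_{j=1}^\infty$ are orthonormal bases of $\cH_x$ and $\cH_u$, the product family $\{\psi_l\}_{l=1}^\infty$ defined in \eqref{eq:3:basis} is an orthonormal basis of $\cH=\cH_x\otimes\cH_u$. Hence every element of $\cH$ has a convergent Fourier expansion in the $\{\psi_l\}$; in particular, by the standing hypothesis \eqref{eq:3:prop1}, each composed observable $\varphi_j\circ F$ lies in $\cH$ and expands as $\varphi_j\circ F=\sum_{l}\langle\varphi_j\circ F,\psi_l\rangle\psi_l$.

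First I would fix a row index $k$ and expand the state observable $\psi_{k,x}\in\cH_x$ in the orthonormal basis $\{\varphi_j\}$, namely $\psi_{k,x}=\sum_j\langle\psi_{k,x},\varphi_j\rangle\varphi_j$ with $\sum_j|\langle\psi_{k,x},\varphi_j\rangle|^2=\|\psi_{k,x}\|^2=1$. Next I would define the $(k,l)$ entry of $\cK$ to be the Fourier coefficient $\cK_{k,l}:=\langle\psi_{k,x}\circ F,\psi_l\rangle$, so that, by the basis property of $\{\psi_l\}$, the $k$-th component satisfies $\psi_{k,x}\circ F=\sum_l\cK_{k,l}\psi_l$ automatically, provided $\psi_{k,x}\circ F\in\cH$. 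The content of the proposition is then to show that $\cK_{k,l}$ factors as the stated sum over $j$. Introducing the composition (Koopman) operator $U_F:\cH_x\to\cH$, $U_Fg:=g\circ F$, I would apply $U_F$ to the expansion of $\psi_{k,x}$ and use continuity to obtain $\psi_{k,x}\circ F=\sum_j\langle\psi_{k,x},\varphi_j\rangle(\varphi_j\circ F)$ in $\cH$; taking the inner product with $\psi_l$ (itself a continuous functional) then yields $\cK_{k,l}=\sum_j\langle\psi_{k,x},\varphi_j\rangle\langle\varphi_j\circ F,\psi_l\rangle$. Collecting these coefficients over $k$ and $l$ is precisely the outer-product sum over $j$ displayed in the statement, and substituting $\Psi(x,u)=\Psi_x(x)\otimes\Psi_u(u)$ from \eqref{eq:3:basis} recovers the Kronecker form \eqref{eq:3:3}. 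As a consistency check, the special case $\varphi_j=\psi_{j,x}$ collapses the inner sum by orthonormality to $\cK_{k,l}=\langle\psi_{k,x}\circ F,\psi_l\rangle$, the bare EDMD coefficient.

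The hard part will be the convergence bookkeeping in the infinite-dimensional setting, which rests entirely on the composition operator $U_F$ being well defined and bounded from $\cH_x$ into $\cH$. The hypothesis \eqref{eq:3:prop1} only guarantees $U_F\varphi_j\in\cH$ for the basis vectors, so the first step is to upgrade this to boundedness of $U_F$ on all of $\cH_x$ (equivalently, that pulling back along $F$ maps $L^2(X,\mu_x)$ continuously into $L^2(X\times U,\mu_x\otimes\mu_u)$); this is where the measure-theoretic regularity of $F$ must be used, following \cite{Asada}. Once $U_F$ is bounded, two facts fall out cleanly: continuity lets $U_F$ commute with the $\cH_x$-limit defining $\psi_{k,x}$, legitimizing the termwise composition above and ensuring $\psi_{k,x}\circ F\in\cH$; and, for each fixed $l$, the scalar series defining $\cK_{k,l}$ converges absolutely, since $\langle\varphi_j\circ F,\psi_l\rangle=\langle\varphi_j,U_F^{*}\psi_l\rangle$ gives $\sum_j|\langle\varphi_j\circ F,\psi_l\rangle|^2=\|U_F^{*}\psi_l\|^2<\infty$ by Parseval, whence Cauchy--Schwarz against the unit-norm sequence $(\langle\psi_{k,x},\varphi_j\rangle)_j$ bounds the sum by $\|U_F^{*}\psi_l\|$. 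With these two estimates in hand the coefficient-matching is routine, so I would devote the bulk of the write-up to establishing boundedness of $U_F$ and state the remaining algebra briefly.
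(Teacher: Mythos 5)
Your proposal follows essentially the same route as the paper's proof: expand each $\psi_{k,x}$ in the orthonormal basis $\{\varphi_j\}$ of $\cH_x$, compose with $F$ termwise, expand each $\varphi_j\circ F$ in the product basis $\{\psi_l\}$ of $\cH=\cH_x\otimes\cH_u$ (via Proposition~\ref{prop:Hilbert}), and collect the coefficients into the stated outer-product sum over $j$. The only difference is that you explicitly flag the interchange of the infinite expansion with composition by $F$ --- which the paper performs formally without comment --- and propose to justify it via boundedness of the composition operator $U_F$; be aware that such boundedness does not follow from hypothesis \eqref{eq:3:prop1} alone (it only controls $U_F$ on the basis vectors $\varphi_j$), so that step would require an additional assumption on $F$ or the pushforward measure, but at the level of rigor the paper itself adopts, your argument matches its proof.
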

\begin{proof}
Since each $\psi_{i,x}\in\cH_x$, $i\in\Nset$, we can expand each $\psi_{i,x}$ as $\psi_{i,x}=\sum_{j=1}^\infty\langle\psi_{i,x},\varphi_j\rangle_{\cH_x}\varphi_j$. Then we have that
\begin{equation}
\label{eq:3:4}
\begin{split}
\Psi_x\circ F=\begin{bmatrix}\psi_{1,x}\circ F\\\psi_{2,x}\circ F\\\vdots\end{bmatrix}&=\begin{bmatrix}\sum_{j=1}^\infty\langle\psi_{1,x},\varphi_j\rangle_{\cH_x}\varphi_j\circ F\\\sum_{j=1}^\infty\langle\psi_{2,x},\varphi_j\rangle_{\cH_x}\varphi_j\circ F\\\vdots\end{bmatrix}\\
&=\sum_{j=1}^\infty\begin{bmatrix}\langle\psi_{1,x},\varphi_j\rangle_{\cH_x}\\\langle\psi_{2,x},\varphi_j\rangle_{\cH_x}\\\vdots\end{bmatrix}\varphi_j\circ F.
\end{split}
\end{equation}
The second equality holds row-by-row, i.e., for each fixed $i\in\Nset$,
expanding $\psi_{i,x}$ in the ONB $\{\varphi_j\}_{j=1}^\infty$
and passing $C_F$ through the sum is justified by boundedness of
$C_F$ (Theorem~\ref{thm:bounded_CF}), giving
$\psi_{i,x}\circ F = \sum_{j=1}^\infty
\langle\psi_{i,x},\varphi_j\rangle_{\cH_x}\varphi_j\circ F$
converging in $\mathcal{H}$.
The last equality is a purely algebraic rearrangement, i.e.,
since each row already converges in $\mathcal{H}$, the summation
and vector stacking can be interchanged, grouping terms by $j$
rather than by $i$.

Next, by Proposition~\ref{prop:Hilbert} we have that $\{\psi_i\}_{i=1}^\infty$ (with $\psi_i$ as defined in \eqref{eq:3:basis}) is an orthonormal basis in $\cH=\cH_x\otimes\cH_u$. Then, since $\varphi_j\circ F\in\cH=\cH_x\otimes\cH_u$ for $j\in\Nset$ it holds that
\begin{equation}
\label{eq:3:5}
\varphi_j\circ F = \sum_{i=1}^\infty\langle\varphi_j\circ F,\psi_i\rangle_{\cH}\psi_i,
\end{equation}
where the inner product on $\cH$ is defined as in \eqref{eq:3:2:i}. Substituting \eqref{eq:3:5} in \eqref{eq:3:4} yields
\[
\begin{split}
&\Psi_x[F(x,u)]=\sum_{j=1}^\infty\begin{bmatrix}\langle\psi_{1,x},\varphi_j\rangle_{\cH_x}\\\langle\psi_{2,x},\varphi_j\rangle_{\cH_x}\\\vdots\end{bmatrix}\varphi_j\circ F(x,u)\\
&=\sum_{j=1}^\infty\begin{bmatrix}\langle\psi_{1,x},\varphi_j\rangle_{\cH_x}\\\langle\psi_{2,x},\varphi_j\rangle_{\cH_x}\\\vdots\end{bmatrix}\sum_{i=1}^\infty\langle\varphi_j\circ F,\psi_i\rangle_{\cH}\psi_i(x,u)=\\
&\sum_{j=1}^\infty\begin{bmatrix}\langle\psi_{1,x},\varphi_j\rangle_{\cH_x}\\\langle\psi_{2,x},\varphi_j\rangle_{\cH_x}\\\vdots\end{bmatrix}[\langle\varphi_j\circ F, \psi_1\rangle_{\cH}, \langle\varphi_j\circ F, \psi_2\rangle_{\cH}, \ldots]\cdot\\
&\cdot \begin{bmatrix}\psi_1(x,u)\\\psi_2(x,u)\\\vdots\end{bmatrix}=\cK\Psi(x,u),
\end{split}
\]
which completes the proof.
\end{proof}

The above result generalizes \cite[Proposition~2]{Asada} to state transition maps $F(x,u)$ with control inputs under the generalized invariance property \eqref{eq:3:prop1}; sufficient conditions on $F$ that guarantee this property have already been identified in Theorem~\ref{thm:bounded_CF}. By letting $z_t:=\Psi_x(x_t)$ and $v_t:=\Psi_u(u_t)$, we can define a Koopman infinite-dimensional bilinear system:
\begin{equation}
\label{eq:3:2}
\begin{split}
z_{t+1}&=\Psi_x(x_{t+1})=\Psi_x[F(x_t,u_t)]=\cK \left(\Psi_x(x_t)\otimes\Psi_u(u_t)\right)\\
&=\cK (z_t\otimes v_t),
\end{split}
\end{equation}
where $\{x_t,u_t\}_{t\in\Nset}$ is a solution of system \eqref{eq:3:1}. The developed construction of the bilinear Koopman model for systems with inputs is graphically illustrated in Figure~\ref{fig2}.
\begin{figure}[!t]
\centerline{\includegraphics[width=0.7\columnwidth]{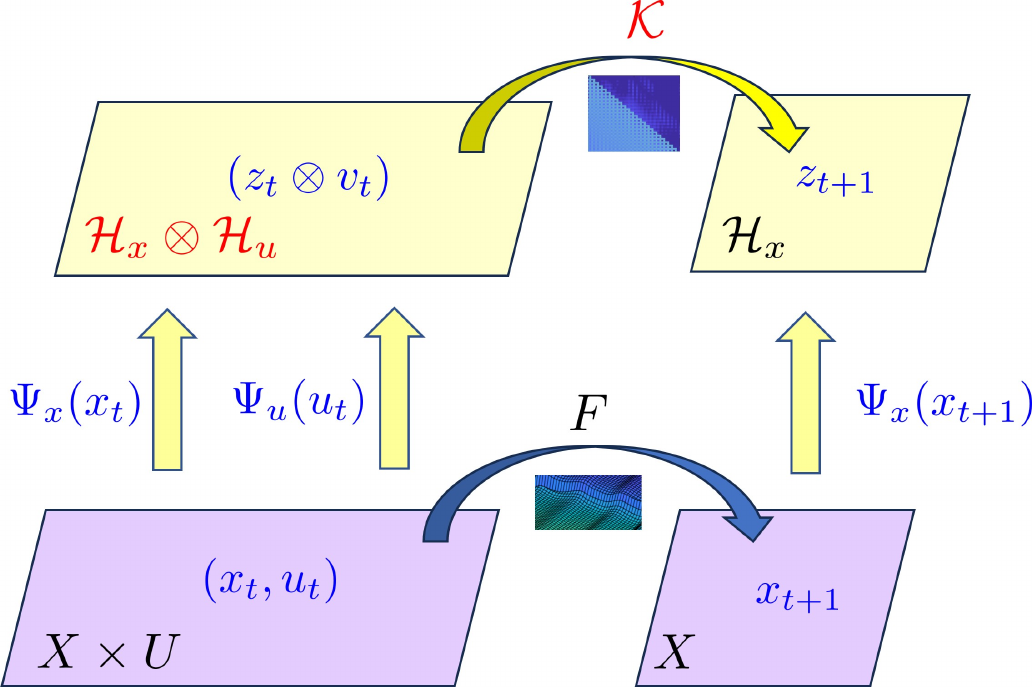}}
\caption{Illustration of the envisioned generalized Koopman operator architecture for systems with control input.}
\label{fig2}
\end{figure}

As shown in \cite[Corollary~1]{Asada} for the case of autonomous state transition maps, if in Proposition~\ref{prop:1} we use $\psi_{i,x}=\varphi_{i}$, $i\in\Nset$, the infinite-dimensional matrix $\cK$ can be simplified to
\begin{equation}
\label{eq:3:6}
\bar \cK=\begin{bmatrix}\langle\varphi_1\circ F,\psi_1\rangle_{\cH}&\langle\varphi_1\circ F,\psi_2\rangle_{\cH}&\ldots\\
\langle\varphi_2\circ F,\psi_1\rangle_{\cH}&\langle\varphi_2\circ F,\psi_2\rangle_{\cH}&\ldots\\
\vdots&\vdots&\ddots\end{bmatrix},
\end{equation}
with $\psi_i(x,u)=\varphi_j(x)\psi_{l,u}(u)$ for all $i\in\Nset$ for some $j,l\in\Nset$.

Next, we show that the condition on the sets of observable functions $\{\psi_{i,x}\}_{i=1}^\infty$, $\{\psi_{i,u}\}_{i=1}^\infty$ can be relaxed to a linearly independent and complete set of functions that form a Riesz basis in $\cH_x$ and $\cH_u$, respectively, instead of an orthonormal basis. First, notice that we can expand $\psi_{i,x}=\sum_{j=1}^\infty\langle\psi_{i,x},\varphi_j\rangle\varphi_j$, which can be compactly written as
\begin{equation}
\label{eq:3:cx}
\begin{bmatrix}\psi_{1,x}\\\psi_{2,x}\\\vdots\end{bmatrix}=
\underbrace{\begin{bmatrix}\langle\psi_{1,x},\varphi_1\rangle_{\cH_x}&\langle\psi_{1,x},\varphi_2\rangle_{\cH_x}&\ldots\\\langle\psi_{2,x},\varphi_1\rangle_{\cH_x}&\langle\psi_{2,x},\varphi_2\rangle_{\cH_x}&\ldots\\\vdots&\vdots&\ddots\end{bmatrix}}_{:=T_x}\begin{bmatrix}\varphi_1\\\varphi_2\\\vdots\end{bmatrix}.
\end{equation}
The same transformation can be done for the product space $\cH$, i.e., $\psi_i(x,u)=\sum_{j=1}^\infty\langle\psi_i,e_j\rangle e_j(x,u)$, which can be compactly written as
\begin{equation}
\label{eq:3:c}
\begin{bmatrix}\psi_1\\\psi_2\\\vdots\end{bmatrix}=
\underbrace{\begin{bmatrix}\langle\psi_1,e_1\rangle_{\cH}&\langle\psi_1,e_2\rangle_{\cH}&\ldots\\\langle\psi_2,e_1\rangle_{\cH}&\langle\psi_2,e_2\rangle_{\cH}&\ldots\\\vdots&\vdots&\ddots\end{bmatrix}}_{:=T}\begin{bmatrix}e_1\\ e_2\\\vdots\end{bmatrix},
\end{equation}
where $\{e_i\}_{i=1}^\infty$ is a generic ONB in $\cH$. Note that these transformations amount to a change of Riesz basis, since orthonormal bases are Riesz bases \cite[Chapter~3.6]{Christensen2016}, and thus, the infinite-dimensional matrices $T_x, T$ are nonsingular, i.e., they are invertible and bounded operators \cite[Theorem~3.6.6]{Christensen2016}.

\begin{corollary}
\label{cor:1}
If the observables $\{\psi_{i,x}\}_{i=1}^\infty$ and $\{\psi_{i,u}\}_{i=1}^\infty$ form a Riesz basis in $\cH_x$ and $\cH_u$, respectively, then the infinite-dimensional matrix $\cK$ in \eqref{eq:3:3} is given by
\begin{equation}
\label{eq:3:7}
\cK=T_x\bar \cK T^{-1},
\end{equation}
with $\bar\cK$, $T_x$ and $T$ defined as in \eqref{eq:3:6}-\eqref{eq:3:c}, respectively.
\end{corollary}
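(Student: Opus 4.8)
The plan is to reduce Corollary~\ref{cor:1} to a pure change-of-basis computation that transports the orthonormal-basis representation $\bar\cK$ of \eqref{eq:3:6} into the Riesz-basis coordinates $\{\psi_{i,x}\}$, $\{\psi_i\}$. The anchor is the identity that $\bar\cK$ already supplies: with the choice $\psi_{i,x}=\varphi_i$ and the orthonormal product basis $\{e_i\}$ of $\cH$, Proposition~\ref{prop:1} (simplified form) gives $\col(\varphi_1\circ F,\varphi_2\circ F,\ldots)=\bar\cK\,\col(e_1,e_2,\ldots)$. Everything else is linear bookkeeping connecting the general Riesz bases to this orthonormal one.

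First I would compose the state-side expansion \eqref{eq:3:cx} with $F$. Exactly as in the derivation of \eqref{eq:3:4} inside the proof of Proposition~\ref{prop:1}, the orthonormal expansion $\psi_{i,x}=\sum_{j}\langle\psi_{i,x},\varphi_j\rangle\varphi_j$ composed with $F$ yields $\psi_{i,x}\circ F=\sum_{j}\langle\psi_{i,x},\varphi_j\rangle(\varphi_j\circ F)$, which in stacked form reads $\col(\psi_{1,x}\circ F,\ldots)=T_x\,\col(\varphi_1\circ F,\ldots)$. Second, I would use the product-side expansion \eqref{eq:3:c}, namely $\col(\psi_1,\psi_2,\ldots)=T\,\col(e_1,e_2,\ldots)$, and invert it: since $\{\psi_{i,x}\}$ and $\{\psi_{i,u}\}$ are Riesz bases, their tensor products $\{\psi_i\}$ form a Riesz basis of $\cH$ by the remark following Proposition~\ref{prop:Hilbert}, so $T$ is bounded with bounded inverse and $\col(e_1,e_2,\ldots)=T^{-1}\col(\psi_1,\psi_2,\ldots)$.

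Chaining the three identities then gives $\col(\psi_{1,x}\circ F,\ldots)=T_x\bar\cK\,\col(e_1,\ldots)=T_x\bar\cK T^{-1}\col(\psi_1,\ldots)$, i.e. $\Psi_x[F(x,u)]=T_x\bar\cK T^{-1}\Psi(x,u)$. Comparing with \eqref{eq:3:3} and invoking uniqueness of the expansion coefficients in the Riesz basis $\{\psi_i\}$ (linear independence and completeness) identifies $\cK=T_x\bar\cK T^{-1}$, as claimed in \eqref{eq:3:7}. The only genuinely delicate point is the boundedness and invertibility of $T$, so that $T^{-1}$ is a legitimate operator and the infinite products are well defined; I would justify this precisely through the tensor-Riesz-basis property recorded after Proposition~\ref{prop:Hilbert}, together with the observation — already used in \eqref{eq:3:4} — that composing a norm-convergent $\cH_x$-expansion with $F$ is admissible under assumption \eqref{eq:3:prop1}. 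The remaining manipulations are routine.
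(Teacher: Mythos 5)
Your argument is correct and is exactly the change-of-Riesz-basis computation the paper intends: the corollary is stated without a separate proof, as an immediate consequence of the preceding observation that $T_x$ and $T$ are bounded and boundedly invertible, and your chain $\col(\psi_{1,x}\circ F,\ldots)=T_x\col(\varphi_1\circ F,\ldots)=T_x\bar\cK\col(e_1,\ldots)=T_x\bar\cK T^{-1}\Psi(x,u)$ is precisely the missing bookkeeping. You also correctly isolate the only delicate points (invertibility of $T$ via the tensor-Riesz-basis property, and composing the norm-convergent expansion with $F$ under \eqref{eq:3:prop1}), which the paper handles in the same way.
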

So far, we have shown the existence of the matrix $\cK$, but its expression depends on a generic, ONB $\{\varphi_i\}_{i=1}^\infty$ in $\cH_x$. Next, we prove that $\cK$ can be constructed just based on the state and input observable functions and the map $F$.
\begin{theorem}
\label{thm:1}
Let $\{\psi_{i,x}\}_{i=1}^\infty$ and $\{\psi_{i,u}\}_{i=1}^\infty$ be independent and complete sets of Riesz basis functions spanning $\cH_x$, and $\cH_u$, respectively. Let the state transition map $F : X\times U\rightarrow X$ in \eqref{eq:3:1} satisfy:
\begin{equation}
\label{eq:3:8}
\psi_{i,x}\circ F\in\cH,\quad i\in\Nset.
\end{equation}
Then there exists an infinite-dimensional matrix $\cK=Q_xR^{-1}$  such that
\begin{equation}
\label{eq:3:9}
\Psi_x(F(x,u))=\cK\Psi(x,u)=\cK(\Psi_x(x)\otimes\Psi_u(u)),\end{equation}
with the matrices $R$ and $Q_x$ given by:
\begin{equation}
\label{eq:3:10}
R=\begin{bmatrix}\langle\psi_1,\psi_1\rangle_{\cH} &\langle\psi_1,\psi_2\rangle_{\cH}&\ldots\\\langle\psi_2,\psi_1\rangle_{\cH}&\langle\psi_2,\psi_2\rangle_{\cH}&\ldots\\\vdots&\vdots&\ddots\end{bmatrix}
\end{equation}
and
\begin{equation}
\label{eq:3:11}
Q_x=\begin{bmatrix}\langle\psi_{1,x}\circ F,\psi_1\rangle_{\cH} &\langle\psi_{1,x}\circ F,\psi_2\rangle_{\cH}&\ldots\\\langle\psi_{2,x}\circ F,\psi_1\rangle_{\cH}&\langle\psi_{2,x}\circ F,\psi_2\rangle_{\cH}&\ldots\\\vdots&\vdots&\ddots\end{bmatrix}.
\end{equation}
\end{theorem}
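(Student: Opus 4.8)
The plan is to expand each propagated state observable $\psi_{i,x}\circ F$ directly in the product Riesz basis $\{\psi_l\}_{l=1}^\infty$ from \eqref{eq:3:basis} and to read off the coefficient matrix as $Q_xR^{-1}$. First I would invoke Proposition~\ref{prop:Hilbert} together with the subsequent remark that the tensor product of Riesz bases is again a Riesz basis, and the isomorphism $L^2(X\times U,\mu_x\otimes\mu_u)\cong\cH_x\otimes\cH_u$ already used in the proof of Proposition~\ref{prop:1}, to conclude that the products $\{\psi_l\}_{l=1}^\infty$ form a Riesz basis for $\cH$. Hypothesis \eqref{eq:3:8} then guarantees $\psi_{i,x}\circ F\in\cH$ for each $i$, so each such function admits a unique $\ell^2$ expansion $\psi_{i,x}\circ F=\sum_{l=1}^\infty c_{il}\psi_l$; stacking these over $i$ gives $\Psi_x(F(x,u))=C\,\Psi(x,u)$ with $C=(c_{il})$, so it only remains to identify $C$ with $Q_xR^{-1}$.

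Next I would determine $C$ by taking the inner product of the expansion with $\psi_k$ and using linearity of $\langle\cdot,\cdot\rangle$ in its first slot, which yields $\langle\psi_{i,x}\circ F,\psi_k\rangle=\sum_l c_{il}\langle\psi_l,\psi_k\rangle$, i.e. $Q_x=CR$ with $R$ and $Q_x$ exactly as in \eqref{eq:3:10}--\eqref{eq:3:11}. Here the only bookkeeping to be careful about is the index order and the conjugation convention, so that the Gram entries $R_{lk}=\langle\psi_l,\psi_k\rangle$ appear in the position the statement requires. Solving for the coefficients gives $C=Q_xR^{-1}$, whence $\cK=Q_xR^{-1}$ and $\Psi_x(F(x,u))=\cK\Psi(x,u)=\cK(\Psi_x(x)\otimes\Psi_u(u))$, which is \eqref{eq:3:9}.

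The crux -- and the single genuinely analytic step -- is the invertibility of the infinite Gram matrix $R$, which I would obtain from the Riesz-basis property rather than from orthonormality (orthonormality would simply give $R=I$). If $\{\psi_l\}$ has Riesz bounds $0<A\le B<\infty$, then for every $c\in\ell^2$ one has $A\|c\|_{\ell^2}^2\le\big\|\sum_l c_l\psi_l\big\|_\cH^2\le B\|c\|_{\ell^2}^2$, and the middle quantity is precisely the Hermitian form associated with the Gram operator $R$; hence $R$ is bounded, self-adjoint and positive definite with $AI\le R\le BI$, and is therefore boundedly invertible (cf. \cite[Chapter~3]{Christensen2003}). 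This is exactly where the weaker Riesz-basis hypothesis is paid for, and it is the place where I expect the proof to require care. I would close by checking consistency with Corollary~\ref{cor:1}: substituting the change-of-basis relations \eqref{eq:3:cx}--\eqref{eq:3:c} and the simplified operator $\bar\cK$ one verifies $Q_xR^{-1}=T_x\bar\cK T^{-1}$, so the two expressions for $\cK$ coincide.
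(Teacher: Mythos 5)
Your proof is correct, and its computational core coincides with the paper's: both arguments derive the Gram relation $Q_x=\cK R$ by pairing the identity \eqref{eq:3:9} with the product basis functions $\psi_k$ (equivalently, integrating against the row vector $[\psi_1,\psi_2,\ldots]$ over $X\times U$) and then invert $R$. Where you genuinely diverge is in how existence of $\cK$ is obtained. The paper first transfers hypothesis \eqref{eq:3:8} to the auxiliary orthonormal basis via \eqref{eq:3:cx}, i.e., it uses $T_x^{-1}$ to conclude $\varphi_i\circ F\in\cH$, invokes Proposition~\ref{prop:1} to get existence of a transition matrix satisfying \eqref{eq:3:9}, and only afterwards extracts the formula $Q_xR^{-1}$. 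You instead expand each $\psi_{i,x}\circ F$ directly and uniquely in the product Riesz basis $\{\psi_l\}$ of $\cH$, obtaining existence and the coefficient matrix in a single step and bypassing $\{\varphi_i\}$ and $T_x$ altogether; this buys a shorter, self-contained argument, while the paper's route reuses the already-proven Proposition~\ref{prop:1} and stays aligned with the representation $\cK=T_x\bar\cK T^{-1}$ of Corollary~\ref{cor:1}, which you only verify as a consistency check. Your handling of the one genuinely analytic step is also more explicit than the paper's: you derive the two-sided operator bound $AI\le R\le BI$ from the Riesz bounds to get bounded invertibility of the Gram operator, whereas the paper simply asserts nonsingularity of $R$ for a Riesz basis and defers the justification to a remark citing \cite[Theorem~3.6.6]{Christensen2003}. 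No gaps beyond those already implicit in the paper's own treatment (e.g., the $L^2$ expansions holding almost everywhere rather than pointwise).
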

\begin{proof}
From \eqref{eq:3:cx} we obtain \[\begin{bmatrix}\varphi_1\circ F\\\varphi_2\circ F\\\vdots\end{bmatrix}=T_x^{-1}\begin{bmatrix}\psi_{1,x}\circ F\\\psi_{2,x}\circ F\\\vdots\end{bmatrix},\]
which, together with \eqref{eq:3:8} implies $\varphi_i\circ F\in\cH$ for all $i\in\Nset$. Then, from Proposition~\ref{prop:1} we have that there exists an infinite-dimensional matrix $\cK$ such that \eqref{eq:3:9} holds. Multiplying both sides of the
equation~\eqref{eq:3:9} on the right by the row
vector $[\psi_1,\psi_2,\ldots]$ gives the matrix
of functions equality
\begin{equation}
  \bar{Q}(x,u) \;=\; \cK\bar{R}(x,u)
    \label{eq:Qbar_Rbar}
\end{equation}
where $\{\psi_j\}_{j=1}^\infty$ is the product
Riesz basis of $\cH = \Hx\otimes\Hu$ and
\begin{align*}
  [\bar{Q}(x,u)]_{ij}
  \;&:=\; \psi_{i,x}(F(x,u))\cdot\psi_j(x,u),\\
  [\bar{R}(x,u)]_{ij}
  \;&:=\; \psi_i(x,u)\cdot\psi_j(x,u).
\end{align*}
For each fixed $i,j\in\mathbb{N}$, integrating
the $(i,j)$-th entry of both sides
of~\eqref{eq:Qbar_Rbar} over $X\times U$ gives
on the left:
\begin{equation*}
\begin{split}
  &\int_{X\times U}[\bar{Q}(x,u)]_{ij}\,
  d(\mu_x\otimes\mu_u)\\
  \;&=\;
  \int_{X\times U}\psi_{i,x}(F(x,u))\psi_j(x,u)\,
  d(\mu_x\otimes\mu_u)\\
  \;&=\;
  \langle\psi_{i,x}\circ F,\,\psi_j\rangle_{\cH}
  \;=\; [Q_x]_{ij},
  \end{split}
  %\label{eq:Qbar_int}
\end{equation*}
and on the right:
\begin{equation*}
\begin{split}
  &\int_{X\times U}[\cK\bar{R}(x,u)]_{ij}\,
  d(\mu_x\otimes\mu_u)\\
  \;&=\;
  \int_{X\times U}\sum_{k=1}^\infty
  [\cK]_{ik}\,\psi_k(x,u)\psi_j(x,u)\,
  d(\mu_x\otimes\mu_u).
  \end{split}
  %\label{eq:KRbar_int}
\end{equation*}
Define the sums
$f_N(x,u) := \sum_{k=1}^N [\cK]_{ik}
\psi_k(x,u)\psi_j(x,u)$,
so that the infinite sum is the pointwise limit
$\lim_{N\to\infty}f_N(x,u) =
\sum_{k=1}^\infty[\cK]_{ik}\psi_k(x,u)\psi_j(x,u)$.
Next we establish that
\begin{equation}
  \int_{X\times U}\lim_{N\to\infty}f_N\,
  d(\mu_x\otimes\mu_u)
  \;=\;
  \lim_{N\to\infty}\int_{X\times U}f_N\,
  d(\mu_x\otimes\mu_u).
  \label{eq:swap}
\end{equation}
This holds by the dominated convergence
theorem~\cite[Theorem~1.34]{Rudin1987} if there
exists $g\in L^1(X\times U,\mu_x\otimes\mu_u)$
such that $|f_N(x,u)|\leq g(x,u)$ for all $N$.
Such a dominating function exists as follows: Cauchy-Schwarz pointwise gives
\begin{align*}
  |f_N(x,u)|
  &\leq
  \Bigl(\sum_{k=1}^N|[\cK]_{ik}|^2\Bigr)^{1/2}
  \Bigl(\sum_{k=1}^N|\psi_k(x,u)|^2\Bigr)^{1/2}
  |\psi_j(x,u)|
  \nonumber\\
  &\leq
  \|\cK\|
  \Bigl(\sum_{k=1}^\infty|\psi_k(x,u)|^2\Bigr)^{1/2}
  |\psi_j(x,u)|
  =: g(x,u),
\end{align*}
uniformly in $N$, where the second inequality bounds the finite
row norm by $\|\cK\|$ and extends the sum to infinity.
Then the Bessel upper bound gives
$\sum_k|\psi_k(x,u)|^2\leq\beta$ a.e., so
$g(x,u) \leq \|\cK\|\,\beta^{1/2}\,|\psi_j(x,u)|$
and hence
$\int g\,d(\mux\otimes\muu)
\leq\|\cK\|\,\beta^{1/2}\,(\mu_x(X)\mu_u(U))^{1/2}\|\psi_j\|_{\cH}<\infty$,
where we used
\begin{align*}
\int_{X\times U} |\psi_j|\, d(\mux\otimes\muu) &\leq \|1\|_{\cH}\|\psi_j\|_{\cH} \\&= \bigl(\mu_x(X)\,\mu_u(U)\bigr)^{1/2}\|\psi_j\|_{\cH} < \infty.
\end{align*}
Thus $g\in L^1$ and~\eqref{eq:swap} holds by the
dominated convergence theorem~\cite[Theorem~1.34]{Rudin1987}.
Hence, it follows that
\begin{equation*}
\begin{split}
  &\int_{X\times U}[\cK\bar{R}(x,u)]_{ij}\,
  d(\mu_x\otimes\mu_u)\\\;&=\;\int_{X\times U}\sum_{k=1}^\infty
  [\cK]_{ik}\psi_k(x,u)\psi_j(x,u)\,
  d(\mu_x\otimes\mu_u)\\
  \;&=\;
  \sum_{k=1}^\infty [\cK]_{ik}
  \underbrace{
  \int_{X\times U}\psi_k(x,u)\psi_j(x,u)\,
  d(\mu_x\otimes\mu_u)
  }_{=\,\langle\psi_k,\psi_j\rangle_{\cH}
  \,=\,[R]_{kj}}
  \;=\; [\cK R]_{ij}.
  \end{split}
  %\label{eq:KRbar_result}
\end{equation*}
Since $[Q_x]_{ij} = [\cK R]_{ij}$ holds for all
$i,j\in\mathbb{N}$, we conclude $Q_x = \cK R$. Since $\{\psi_i\}_{i=1}^\infty$ is a Riesz basis in $\cH=\cH_x\otimes\cH_u$, its Gram matrix $R$ is nonsingular/invertible~\cite[Theorem~3.6.6]{Christensen2016}, which yields $\cK=Q_xR^{-1}$.
\end{proof}

\begin{remark}[Boundedness of $\cK$ and its relation to $C_F$]
\label{rem:K_bounded}
The infinite-dimensional matrix $\cK = Q_xR^{-1}$
defines a bounded linear operator $\cK:\ell^2\to\ell^2$. Indeed,
\begin{equation}
  \cK \;=\; Q_xR^{-1}
  \;=\; S^*\circ C_F\circ S_x\circ R^{-1},
  \label{eq:K_factorization}
\end{equation}
where $S_x:\ell^2\to\Hx$ and $S:\ell^2\to\cH$
are the synthesis operators~\cite[Ch.~3]{Christensen2016} of the Riesz
bases in $\cH_x$ and $\cH$, respectively, with adjoints
$S_x^*:\Hx\to\ell^2$ and $S^*:\cH\to\ell^2$ the corresponding
analysis operators. Above, $R$ is the
Gram matrix of the product Riesz basis in $\cH$ with $[R]_{i,j}
= \langle\psi_i(x,u),\psi_{j}(x,u)
\rangle_{\mathcal{H}}$.
The lifted dynamics $z_{t+1} = \cK(z_t\otimes v_t)$
are the coordinate-space expression of
$\psi_{i,x}(x_{t+1}) = \psi_{i,x}(F(x_t,u_t))$
for all $i\in\mathbb{N}$, connected to $C_F$ via
the Riesz basis isomorphism. Boundedness of $\mathcal{K}$ follows from the
factorization $\mathcal{K} = Q_x R^{-1}$
in~\eqref{eq:K_factorization}.
The factor $R^{-1}:\ell^2\to\ell^2$ is bounded
with $\|R^{-1}\|\leq\alpha^{-1}$, since the
Riesz basis lower bound gives $R\geq\alpha I$
on $\ell^2$~\cite[Theorem~3.6.6]{Christensen2016}.
 $Q_x:\ell^2\to\ell^2$ is bounded since
\begin{equation}
  \|Q_x\|
  \leq \|S^*\|\,\|C_F\|_{\mathcal{H}_x\to\mathcal{H}}\,\|S_x\|
  \leq \beta\, M^{1/2},
\end{equation}
where $\|S_x\|,\|S^*\|\leq\beta^{1/2}$ by the
Bessel property of the Riesz
bases~\cite[Theorem~3.2.3]{Christensen2016}
and $\|C_F\|_{\mathcal{H}_x\to\mathcal{H}}\leq M^{1/2}$
by Theorem~\ref{thm:bounded_CF}.
Submultiplicativity then gives
\begin{equation}
  \|\mathcal{K}\|_{\ell^2\to\ell^2}
  \;\leq\;
  \frac{M^{1/2}\beta}{\alpha},
  \label{eq:K_norm_bound}
\end{equation}
where $\beta/\alpha$ is the Riesz basis
condition number.
\end{remark}

\subsection{Input-affine nonlinear systems}
Consider the input-affine discrete-time system
\begin{equation}
  x_{t+1} \;=\; F(x_t, u_t)
  \;:=\; f(x_t) + \sum_{j=1}^{m} g_j(x_t)\, u_{j,t},
  \label{eq:inputaffine}
\end{equation}
with $f, g_j : X \to \mathbb{R}^n$ measurable,
$u = (u_1,\ldots,u_m)^\top \in U\subseteq\Rset^m $. 
\begin{definition}
\label{def:psi_u}
Let the $m+1$ scalar-valued input observables
$\psi_{j,u} : U \to \mathbb{R}$ be defined as
\begin{equation}
  \psi_{0,u}(u) \;:=\; 1,
  \qquad
  \psi_{j,u}(u) \;:=\; u_j,
  \quad j = 1, \ldots, m.
  \label{eq:psi_u_def}
\end{equation}
The corresponding input observable subspace is
\begin{equation}
  \mathcal{V}_u
  \;:=\; \operatorname{span}\{\psi_{0,u}, \psi_{1,u},
         \ldots, \psi_{m,u}\}
  \;\subset\; L^2(U,\mu_u),
  \label{eq:Vu}
\end{equation}
which has dimension $m+1$ since $\{\psi_{j,u}\}_{j=0}^m$
are linearly independent in $L^2(U,\mu_u)$ for any $U\subseteq\mathbb{R}^m$
of positive Lebesgue measure.
Define $\mathcal{H}_u := \mathcal{V}_u$ and the map
\begin{equation}
\begin{split}
  \Psi_u(u_t) \;:=\; v_t
  \;&:=\; \bigl(\psi_{0,u}(u_t),\, \psi_{1,u}(u_t),\,
               \ldots,\, \psi_{m,u}(u_t)\bigr)^\top\\
  \;&=\; (1,\, u_{1,t},\, \ldots,\, u_{m,t})^\top
  \;\in\; \mathbb{R}^{m+1}.
  \end{split}
  \label{eq:Psi_u}
\end{equation}
\end{definition}
\begin{assumption}
\label{ass:ia_membership}
For each $i \in \mathbb{N}$, $\psi_{i,x} \circ F\in \mathcal{H}_x \otimes \mathcal{V}_u$,
i.e., there exist $a_i, b_{ij} \in \mathcal{H}_x$ such that
\begin{equation}
\begin{split}
  \psi_{i,x}(F(x,u))
  \;&=\; a_i(x)\,\psi_{0,u}(u)
        \;+\; \sum_{j=1}^m b_{ij}(x)\,\psi_{j,u}(u).
  \end{split}
  \label{eq:ia_membership}
\end{equation}
\end{assumption}

\begin{theorem}
\label{thm:relaxed}
Let $\{\psi_{i,x}\}_{i=1}^\infty$ be a Riesz basis for
$\mathcal{H}_x$, let $\{\psi_{j,u}\}_{j=0}^m$ be as in
Definition~\ref{def:psi_u}, and let
Assumption~\ref{ass:ia_membership} hold. Let
$\{e_j\}_{j=0}^m$ denote the standard basis of
$\mathbb{R}^{m+1}$. Then there exist bounded operators
$\cK:\ell^2\to\ell^2$, $\cA:\ell^2\to\ell^2$ and $\cN_j:\ell^2\to\ell^2$,
$j=1,\ldots,m$, such that
\begin{equation}
  z_{t+1} \;=\; \cK(z_t\otimes v_t)
  \;=\; \cA z_t + \sum_{j=1}^m \cN_j z_t\, u_{j,t},
  \label{eq:bilinear}
\end{equation}
with $\cK = \cA\otimes e_0^\top + \sum_{j=1}^m
\cN_j\otimes e_j^\top$.
\end{theorem}
The proof is given in Appendix~\ref{a:0}.
\begin{remark}[Relation with exact bilinearization]
\label{rem:goswami}
Assumption~\ref{ass:ia_membership} can be regarded as the discrete-time
counterpart of the necessary and sufficient conditions
of~\cite{Bruder2020} for exact bilinearization of
continuous-time control-affine systems, requiring the
autonomous part and input coefficient functions of each
observable to lie in $\mathcal{H}_x$.
In~\cite{GoswamPaley2022}, these conditions are enforced
via closure under Lie derivatives along the input vector
fields. The linear-in-control model of~\cite{Korda_2018_Koop},
$z_{t+1} = Az_t + Bu_t$, corresponds to the special case
$b_{ij}(x) = [B]_{ij}$ constant, which is generally not
satisfied when $g_j(x)$ is state-dependent.
\end{remark}
\subsection{Data-driven finite-dimensional approximation}
\label{sec:3:1}
In this section we provide brief guidelines for applying the EDMD approach \cite{Williams_EDMD_2015} to the developed generalized Koopman operator. The main idea of EDMD in its simplest form, i.e., least-squares regression, is to
generate a set of data snapshots $\{(x_t,u_t,x_{t+1})\}_{t\in\Nset_{[0,T-1]}}$, $T\in\Nset_{\geq 1}$ using the nonlinear dynamical system \eqref{eq:3:1}, i.e., such that it holds that  $x_{t+1}=F(x_t,u_t)$ for all $t$. Then, letting $z_t=\Psi_x(x_t)\in\Rset^{n_z}$, $v_t=\Psi_u(u_t)\in\Rset^{n_v}$, for any $t\in\Nset$, where the vectors of observables are now finite-dimensional, we can define the data matrices:
\begin{subequations}
\label{eq:d:1}
\begin{align}
Z_X&=\begin{bmatrix}\Psi_x(x_0)&\Psi_x(x_1)&\ldots&\Psi_x(x_{T-1})\end{bmatrix}\nonumber\\
&=\begin{bmatrix}z_0&z_1&\ldots&z_{T-1}\end{bmatrix}\in\Rset^{n_z\times T},\label{eq:d:1a}\\
V_U&=\begin{bmatrix}\Psi_u(u_0)&\Psi_u(u_1)&\ldots&\Psi_u(u_{T-1})\end{bmatrix}\nonumber\\
&=\begin{bmatrix}v_0&v_1&\ldots&v_{T-1}\end{bmatrix}\in\Rset^{n_v\times T},\label{eq:d:1b}\\
Z_X^+&=\begin{bmatrix}\Psi_x(x_1)&\Psi_x(x_2)&\ldots&\Psi_x(x_{T})\end{bmatrix}\nonumber\\
&=\begin{bmatrix}z_1&z_2\ldots&z_T\end{bmatrix}\in\Rset^{n_z\times T}.\label{eq:d:1c}
\end{align}
\end{subequations}
Above $n_z,n_v\in\Nset$ correspond to the number of state and input observable functions, respectively.
\begin{figure}[!t]
\centerline{\includegraphics[width=1\columnwidth]{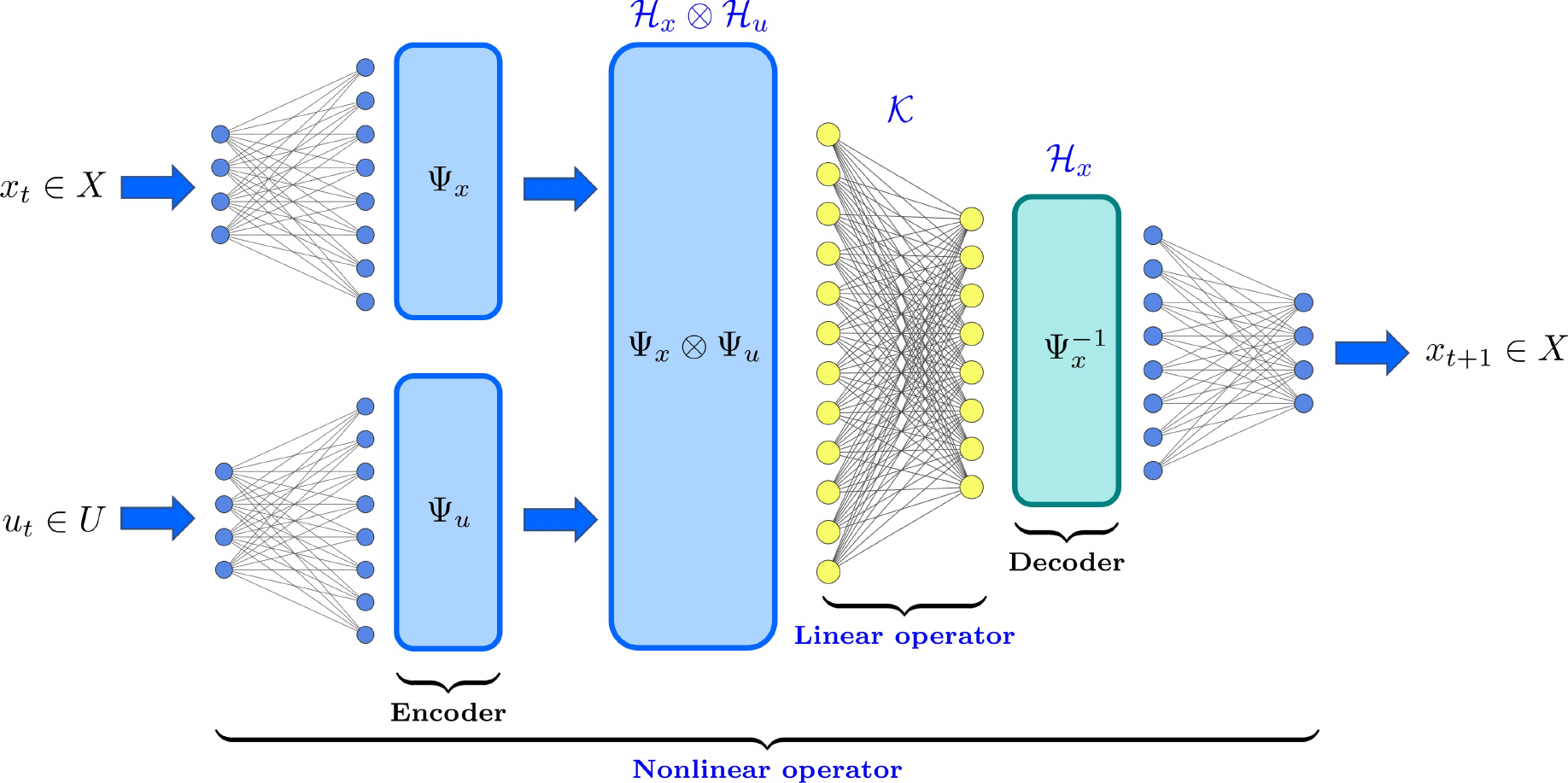}}
\caption{Generalized Koopman operator learning architecture: \emph{(i)}~State and input at time $t$ are lifted via Encoders; \emph{(ii)}~Tensor product of the resulting lifted state and input vectors; \emph{(iii)}~Koopman linear operator advances the lifted state to $t+1$; \emph{(iv)}~State at time $t+1$ is retrieved using a Decoder.}
\label{fig3}
\end{figure}
Next, differently from standard EDMD for systems with control input, which computes matrices $(A,B)$ such that $Z_X^+\approx AZ_X+BV_U$, our aim is to compute a matrix $\hat \cK\in\Rset^{n_z\times n_zn_v}$ such that
\begin{equation}
\label{eq:d:2}
\begin{split}
Z_X^+&=\hat\cK (Z_X\odot V_U)\\
&=\hat \cK \begin{bmatrix}z_0\otimes v_0&z_1\otimes v_1&\ldots&z_{T-1}\otimes v_{T-1}\end{bmatrix}.
\end{split}
\end{equation}
Above $\odot$ denotes the Khatri-Rao product and $\otimes$ denotes the Kronecker product of two columns/vectors. Assuming that the matrix $(Z_X\odot V_U)$ has full-row rank, we can compute a finite-dimensional approximation of $\cK$ as
\begin{equation}
\label{eq:d:3}
\hat\cK:=Z_X^+(Z_X\odot V_U)^\dagger.
\end{equation}
Using a randomised SVD~\cite{Halko2011} applied to the
Khatri-Rao matrix $(Z_X\odot V_U)\in\mathbb{R}^{n_zn_v\times T}$,
the computational cost reduces to $O(n_z n_v \cdot T \cdot r)$, where $r\ll n_zn_v$
is the target rank, making the method scalable to large datasets via GPU implementation.

Figure~\ref{fig3} illustrates the developed architecture for learning generalized Koopman operators, in which kernel functions, deep neural networks or Takens delay embeddings can be used as observable functions. 
\begin{remark}[Convergence analysis]
\label{rem:edmd_error}
Truncating to $n_z$ state and $n_v$ input observables,
the total EDMD approximation error decomposes as
\begin{align}
  \|\cK &- \hat{\cK}^{(n_z,n_v)}\|
  \nonumber\\&\leq
  \underbrace{\|\cK - \cK^{(n_z,n_v)}\|}_{\text{Galerkin error}}
  +
  \underbrace{\|\cK^{(n_z,n_v)} -
  \hat{\cK}^{(n_z,n_v)}\|}_{\text{data error}},
  \label{eq:error_decomp}
\end{align}
where $\cK^{(n_z,n_v)}$ is the exact Galerkin
projection onto the truncated subspace.
The Galerkin error vanishes as $n_z,n_v\to\infty$
by completeness of the Riesz bases and boundedness
of $\cK$ (Remark~\ref{rem:K_bounded}), provided
the truncated subspaces are nested with dense union
in $\ell^2$; the analogous result for orthonormal
bases is proved in~\cite{Korda_2018_convergence}.
When the observables are universal
kernels~\cite{Micchelli_2006} on compact $X$, $U$
and $F$ is sufficiently smooth, the functions
$\psi_{i,x}\circ F$ lie in the product RKHS
$\mathcal{H}_{k_x}\otimes\mathcal{H}_{k_u}$, which
is dense in $C(X\times U)$ by universality;
if additionally $\mu_x\otimes\mu_u$ is a regular
Borel measure, then $C(X\times U)$ is dense in
$L^2(X\times U,\mu_x\otimes\mu_u)$~\cite[Theorem~3.14]{Rudin1987},
and the Galerkin error decays at the kernel
interpolation rate~\cite{Wendland2004}.
The data error is $O(T^{-1/2})$ by concentration
inequalities for empirical inner products when the
data triples are independently
sampled~\cite{KlusNuske2020}.
\end{remark}
\begin{remark}
Finite-dimensional EDMD approximation with
$n_z$ state and $n_v$ input observables does not require that the
generalized invariance condition
$\psi_{i,x}\circ F\in\mathcal{H}$ holds for the truncated system, i.e., the finite matrix
$\hat{\cK}^{(n_z,n_v)}$ is always well-defined as
the Galerkin projection of the infinite-dimensional
$\cK$ onto the finite subspace
$\mathcal{V}_{n_z}\otimes\mathcal{V}_{n_v}$.
Boundedness of $\cK$ (Remark~\ref{rem:K_bounded})
and completeness of the Riesz basis
 guarantee that the Galerkin truncation error
converges to $0$ as $n_z,n_v\to\infty$~\cite{Korda_2018_convergence}.
Invariance is not required for this convergence, which is consistent with classical EDMD. Alternatively, one can aim to learn a finite-dimensional invariant subspace, as done in \cite{Brunton_PLOS}. In finite-dimensional EDMD, one can employ the extended bilinear Koopman model
\begin{equation*}
\begin{split}
&z_{t+1}=A z_t+B v_t+\hat\cK(z_t\otimes v_t)=\\
&A\begin{bmatrix}x_t\\\Psi_x(x_t)\end{bmatrix}+B\begin{bmatrix}u_t\\\Psi_u(u_t)\end{bmatrix}+\hat\cK\left(\begin{bmatrix}x_t\\\Psi_x(x_t)\end{bmatrix}\otimes\begin{bmatrix}u_t\\\Psi_u(u_t)\end{bmatrix}\right).
\end{split}
\end{equation*}
Including unlifted states and inputs yields injectivity and it enables cost functions and constraints in the original system variables; the linear terms can also compensate for residuals.
\end{remark}

\section{The nonlinear fundamental lemma: \\ a bilinear-Koopman approach}
\label{sec:4}
In order to derive an analogue of the fundamental lemma \cite{WillemsRapisarda2005} for nonlinear systems, consider next the nonlinear system \eqref{eq:3:1} with a state-dependent output added, i.e.,
\begin{equation}
\label{eq:4:1}
\begin{split}
x_{t+1}&=F(x_t,u_t),\quad t\in\Nset\\
y_t&=h(x_t),
\end{split}
\end{equation}
where $h:X\rightarrow Y$, $Y\subseteq\Rset^p$ is a suitable mapping. We assume that $h$ is Lebesgue integrable and we introduce the Hilbert space $\cH_y=L^2(Y,\mu_y)$ on $Y\subseteq\Rset^p$ with the standard inner product  $\langle\cdot,\cdot\rangle_{\cH_y}:\cH_y\times\cH_y\rightarrow\Kset$.
\subsection{Koopman representations with lifted outputs}
Next, we consider a composition operator $C_h : \cH_y \to \cH_x$ defined by $(C_h\varphi)(x) = \varphi(h(x))$ for a measurable output map $h: X\to Y$ and we directly prove existence of an infinite-dimensional matrix $\cC$ that is the coordinate representation of $C_h$ in a Riesz basis, as done for the generalized Koopman operator. To this end we introduce the following set of $\Kset$-valued observable functions for the output of system \eqref{eq:4:1}, i.e., $\{\psi_{i,y}\}_{i=1}^\infty$, with $\psi_{i,y}\in\cH_y$ for all $i\in\Nset$. Furthermore, we construct a lifted output vector $\Psi_y(y_t)$ by stacking the corresponding observable functions, e.g., $\Psi_y(y_t)=\col(\psi_{1,y}(y_t),\psi_{2,y}(y_t),\ldots )$.
\begin{proposition}
\label{prop:1y}
Let $\{\psi_{i,x}\}_{i=1}^\infty$ be an orthonormal basis in $\cH_x$ and let $\{\phi_{i}\}_{i=1}^\infty$ and $\{\psi_{i,y}\}_{i=1}^\infty$ be two orthonormal bases in $\cH_y$. Let the output map $h:X\rightarrow Y$ in \eqref{eq:4:1} satisfy:
\begin{equation}
\label{eq:4:3}
\phi_{i}\circ h\in\cH_x,\quad i\in\Nset.
\end{equation}
Then there exists an infinite-dimensional matrix $\cC$ such that
\begin{equation}
\label{eq:4:4}
\Psi_y[h(x)]=\cC\Psi_x(x),
\end{equation}
where
\begin{equation*}
\begin{split}
\cC=&\sum_{j=1}^\infty \begin{bmatrix}\langle\psi_{1,y},\phi_{j}\rangle_{\cH_y}\\\langle\psi_{2,y},\phi_{j}\rangle_{\cH_y}\\\vdots\end{bmatrix}\cdot\\&\cdot\begin{bmatrix}\langle\phi_{j}\circ h,\psi_{1,x}\rangle_{\cH_x} & \langle\phi_{j}\circ h,\psi_{2,x}\rangle_{\cH_x}& \ldots\end{bmatrix}.
\end{split}
\end{equation*}
\end{proposition}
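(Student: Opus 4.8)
The statement of Proposition~\ref{prop:1y} is structurally identical to Proposition~\ref{prop:1}, but with the bilinear product Hilbert space $\cH$ replaced by the single state Hilbert space $\cH_x$, the map $F$ replaced by the output map $h$, and the roles of the state/input bases collapsed to a single output basis. Hence the plan is to mimic the proof of Proposition~\ref{prop:1} almost verbatim, carrying out a double orthonormal expansion. First I would expand each output observable $\psi_{i,y}$ in the auxiliary orthonormal basis $\{\phi_j\}_{j=1}^\infty$ of $\cH_y$, writing $\psi_{i,y}=\sum_{j=1}^\infty\langle\psi_{i,y},\phi_j\rangle\phi_j$, and stack these expansions into the vector identity
\begin{equation*}
\Psi_y[h(x)]=\begin{bmatrix}\psi_{1,y}\circ h\\\psi_{2,y}\circ h\\\vdots\end{bmatrix}=\sum_{j=1}^\infty\begin{bmatrix}\langle\psi_{1,y},\phi_j\rangle\\\langle\psi_{2,y},\phi_j\rangle\\\vdots\end{bmatrix}\phi_j\circ h.
\end{equation*}

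**Second expansion.** The second step uses the hypothesis \eqref{eq:4:3}, namely $\phi_j\circ h\in\cH_x$ for every $j\in\Nset$. Since $\{\psi_{i,x}\}_{i=1}^\infty$ is an orthonormal basis of $\cH_x$, I would expand each composed function as $\phi_j\circ h=\sum_{i=1}^\infty\langle\phi_j\circ h,\psi_{i,x}\rangle\psi_{i,x}$, where the inner product is the one on $\cH_x$ given by \eqref{eq:3:1:i}. Substituting this into the stacked identity above and regrouping the scalar coefficients into an outer product of a column vector (the coefficients $\langle\psi_{i,y},\phi_j\rangle$) with a row vector (the coefficients $\langle\phi_j\circ h,\psi_{i,x}\rangle$) acting on the basis vector $\Psi_x(x)=\col(\psi_{1,x},\psi_{2,x},\ldots)$, I would read off exactly the claimed series form of $\cC$, obtaining $\Psi_y[h(x)]=\cC\Psi_x(x)$. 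This is the same bookkeeping manoeuvre as in \eqref{eq:3:4}--\eqref{eq:3:5}, and it completes the formal derivation.

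**Main obstacle.** The only genuine subtlety—as opposed to routine matrix bookkeeping—is justifying the interchange of the two infinite summations when substituting the second expansion into the first, and confirming that the resulting doubly-infinite array is a well-defined (bounded) operator rather than a merely formal object. Here the argument rests on both $\{\phi_j\}$ being an orthonormal basis of $\cH_y$ and $\{\psi_{i,x}\}$ being an orthonormal basis of $\cH_x$, so that Parseval/Bessel guarantees the coefficient sequences are square-summable and the $L^2$-convergence of each expansion is unconditional; the hypothesis \eqref{eq:4:3} is precisely what makes the inner expansion legitimate. I would note that this is strictly simpler than the $F$-case of Proposition~\ref{prop:1}, since no tensor-product structure and no appeal to Proposition~\ref{prop:Hilbert} or Fubini's theorem is needed—the codomain lives in the single space $\cH_x$. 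One stylistic point worth flagging is that the proposition hypothesizes \emph{two} orthonormal bases $\{\phi_i\}$ and $\{\psi_{i,y}\}$ of $\cH_y$; the auxiliary basis $\{\phi_i\}$ plays the role that $\{\varphi_i\}$ played in Proposition~\ref{prop:1}, and the simplification $\psi_{i,y}=\phi_i$ would collapse $\cC$ to a single Gram-type array exactly as in \eqref{eq:3:6}.
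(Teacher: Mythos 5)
Your proposal is correct and follows essentially the same route as the paper's proof in Appendix~\ref{a:1}: expand each $\psi_{i,y}$ in the auxiliary basis $\{\phi_j\}$, then expand each $\phi_j\circ h$ in $\{\psi_{i,x}\}$ using hypothesis \eqref{eq:4:3}, and regroup the coefficients into the outer-product series defining $\cC$. Your additional remarks on the summation interchange and on the analogy with Proposition~\ref{prop:1} go slightly beyond what the paper writes down, but the core argument is identical.
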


As done for $\cK$, if we choose $\psi_{i,y}=\phi_{i}$, $i\in\Nset$, the output matrix $\cC$ can be simplified to
\begin{equation}
\label{eq:4:7}
\bar \cC=\begin{bmatrix}\langle\phi_1\circ h,\psi_{1,x}\rangle_{\cH_x}&\langle\phi_1\circ h,\psi_{2,x}\rangle_{\cH_x}&\ldots\\
\langle\phi_2\circ h,\psi_{1,x}\rangle_{\cH_x}&\langle\phi_2\circ h,\psi_{2,x}\rangle_{\cH_x}&\ldots\\
\vdots&\vdots&\ddots\end{bmatrix}
\end{equation}
and the condition on the set of observable functions $\{\psi_{i,y}\}_{i=1}^\infty$ can be relaxed to a linearly independent and complete set of functions that form a Riesz basis in $\cH_y$. First, notice that we can expand  $\psi_{i,y}=\sum_{j=1}^\infty\langle\psi_{i,y},\phi_j\rangle_{\cH_y}\phi_j$, which can be compactly written as
\begin{equation}
\label{eq:3:cy}
\begin{bmatrix}\psi_{1,y}\\\psi_{2,y}\\\vdots\end{bmatrix}=
\underbrace{\begin{bmatrix}\langle\psi_{1,y},\phi_1\rangle_{\cH_y}&\langle\psi_{1,y},\phi_2\rangle_{\cH_y}&\ldots\\\langle\psi_{2,y},\phi_1\rangle_{\cH_y}&\langle\psi_{2,y},\phi_2\rangle_{\cH_y}&\ldots\\\vdots&\vdots&\ddots\end{bmatrix}}_{:=T_y}\begin{bmatrix}\phi_1\\\phi_2\\\vdots\end{bmatrix},
\end{equation}
where the infinite-dimensional matrix $T_y$ is nonsingular.
\begin{corollary}
\label{cor:1y}
If the observables $\{\psi_{i,x}\}_{i=1}^\infty$ and $\{\psi_{i,y}\}_{i=1}^\infty$ form a Riesz basis in $\cH_x$ and $\cH_y$, respectively, then the infinite-dimensional matrix $\cC$ in \eqref{eq:4:4} is given by
\begin{equation}
\label{eq:4:8}
\cC=T_y\bar \cC T_x^{-1},
\end{equation}
with $T_x, T_y, \bar \cC$ defined in \eqref{eq:3:cx}, \eqref{eq:3:cy} and \eqref{eq:4:7}, respectively.
\end{corollary}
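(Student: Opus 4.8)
The plan is to mirror the change-of-Riesz-basis argument already used for the generalized Koopman operator in Corollary~\ref{cor:1}, now transporting the output operator $\bar\cC$ from its orthonormal representation \eqref{eq:4:7} to the Riesz representation \eqref{eq:4:4}. I would keep the orthonormal bases $\{\varphi_i\}_{i=1}^\infty$ for $\cH_x$ and $\{\phi_i\}_{i=1}^\infty$ for $\cH_y$ fixed as references, and then use the hypothesis that $\{\psi_{i,x}\}_{i=1}^\infty$ and $\{\psi_{i,y}\}_{i=1}^\infty$ are Riesz bases to relate them to these orthonormal references through the operators $T_x$ of \eqref{eq:3:cx} and $T_y$ of \eqref{eq:3:cy}.

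First, I would invoke the orthonormal version of Proposition~\ref{prop:1y} for the pair $\{\phi_i\}_{i=1}^\infty$, $\{\varphi_i\}_{i=1}^\infty$. Since the composition hypothesis $\phi_i\circ h\in\cH_x$ is stated for the \emph{fixed} orthonormal basis $\{\phi_i\}$, it carries over unchanged, and expanding each $\phi_i\circ h$ in $\{\varphi_j\}$ yields the compact identity $\col(\phi_1\circ h,\phi_2\circ h,\ldots)=\bar\cC\,\col(\varphi_1,\varphi_2,\ldots)$ with $\bar\cC$ as in \eqref{eq:4:7}. Writing $\Phi:=\col(\phi_1,\phi_2,\ldots)$ and $\Phi_x:=\col(\varphi_1,\varphi_2,\ldots)$, the second step is a change of basis on both sides: from \eqref{eq:3:cy} we have $\Psi_y=T_y\Phi$, so composing with $h$ gives $\Psi_y[h(x)]=T_y\,(\Phi\circ h)(x)$; from \eqref{eq:3:cx} we have $\Psi_x=T_x\Phi_x$, hence $\Phi_x=T_x^{-1}\Psi_x$. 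Substituting the first-step identity then gives
\[
\Psi_y[h(x)]=T_y\,(\Phi\circ h)(x)=T_y\bar\cC\,\Phi_x(x)=T_y\bar\cC T_x^{-1}\Psi_x(x),
\]
and comparison with \eqref{eq:4:4} identifies $\cC=T_y\bar\cC T_x^{-1}$.

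The main obstacle is not the algebra but the functional-analytic justification that the infinite-dimensional operators $T_x$ and $T_y$ are bounded bijections, so that $T_x^{-1}$ exists and the termwise manipulations above are legitimate. This is precisely where the Riesz basis hypothesis is indispensable: as noted in the remark following Theorem~\ref{thm:1}, mere linear independence and completeness of the observables do not guarantee invertibility of $T_x,T_y$, whereas the Riesz basis property is \emph{necessary and sufficient} for these operators to be bounded and boundedly invertible, cf. \cite[Theorem~3.6.6]{Christensen2003}. I would therefore close the argument by remarking that each orthonormal expansion converges in the respective Hilbert space and that boundedness of $T_x,T_y,\bar\cC$ permits their termwise action on these expansions, which makes the displayed chain of equalities rigorous.
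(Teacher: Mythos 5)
Your change-of-Riesz-basis derivation is correct and is exactly the argument the paper intends: the corollary is stated in the paper without proof, and your two-step reasoning (orthonormal representation $\Phi\circ h=\bar\cC\,\Phi_x$, then transport by $T_y$ on the left and $T_x^{-1}$ on the right, justified by bounded invertibility of $T_x,T_y$ from the Riesz-basis hypothesis) parallels the construction around \eqref{eq:3:cx}, \eqref{eq:3:cy} and the analogous Corollary~\ref{cor:1}. The one point worth making explicit is that for the identity $\Phi\circ h=\bar\cC\,\Phi_x$ to hold, the entries of $\bar\cC$ in \eqref{eq:4:7} must be read as inner products against the orthonormal reference basis $\{\varphi_j\}$ of $\cH_x$ (as you implicitly do when you expand $\phi_i\circ h$ in $\{\varphi_j\}$), since with a genuinely non-orthonormal Riesz basis $\{\psi_{j,x}\}$ one would instead get $\langle\phi_i\circ h,\psi_{j,x}\rangle=(\bar\cC T_x^{*})_{ij}$ and the stated formula would acquire an extra factor.
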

The version of Theorem~\ref{thm:1} for the output is given next.
\begin{theorem}
\label{thm:1y}
Let $\{\psi_{i,x}\}_{i=1}^\infty$ and $\{\psi_{i,y}\}_{i=1}^\infty$ be independent and complete sets of Riesz basis functions spanning $\cH_x$ and $\cH_y$, respectively. Let the output map $h:X\rightarrow Y$ in \eqref{eq:4:1} satisfy:
\begin{equation}
\label{eq:3:8y}
\psi_{i,y}\circ h\in\cH_x,\quad i\in\Nset.
\end{equation}
Then there exists an infinite-dimensional matrix $\cC=Q_yR_x^{-1}$ such that
\begin{equation}
\label{eq:3:9y}
\Psi_y[h(x)]=\cC\Psi_x(x),
\end{equation}
with the matrices $R_x$ and  $Q_y$ given by:
\begin{equation}
\label{eq:3:10y}
R_x=\begin{bmatrix}\langle\psi_{1,x},\psi_{1,x}\rangle_{\cH_x} &\langle\psi_{1,x},\psi_{2,x}\rangle_{\cH_x}&\ldots\\\langle\psi_{2,x},\psi_{1,x}\rangle_{\cH_x}&\langle\psi_{2,x},\psi_{2,x}\rangle_{\cH_x}&\ldots\\\vdots&\vdots&\ddots\end{bmatrix}
\end{equation}
and
\begin{equation}
\label{eq:3:11y}
Q_y=\begin{bmatrix}\langle\psi_{1,y}\circ h,\psi_{1,x}\rangle_{\cH_x} &\langle\psi_{1,y}\circ h,\psi_{2,x}\rangle_{\cH_x}&\ldots\\\langle\psi_{2,y}\circ h,\psi_{1,x}\rangle_{\cH_x}&\langle\psi_{2,y}\circ h,\psi_{2,x}\rangle_{\cH_x}&\ldots\\\vdots&\vdots&\ddots\end{bmatrix}.
\end{equation}
\end{theorem}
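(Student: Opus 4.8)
The plan is to mirror the proof of Theorem~\ref{thm:1}, with the output map $h$ playing the role of $F$ and the state Hilbert space $\cH_x$ playing the role of the product space $\cH$. Note that no tensor-product construction (and hence no appeal to Proposition~\ref{prop:Hilbert}) is needed here, since by hypothesis the composition $\psi_{i,y}\circ h$ already lands in the single space $\cH_x$; this is what makes the output case strictly simpler than the state-transition case.

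First I would use the Riesz-basis change of variables \eqref{eq:3:cy} to transfer the hypothesis \eqref{eq:3:8y} from the Riesz basis $\{\psi_{i,y}\}_{i=1}^\infty$ to the orthonormal basis $\{\phi_i\}_{i=1}^\infty$. Composing \eqref{eq:3:cy} with $h$ gives $\col(\phi_1\circ h,\phi_2\circ h,\ldots)=T_y^{-1}\col(\psi_{1,y}\circ h,\psi_{2,y}\circ h,\ldots)$, and since $T_y$ is bounded with bounded inverse while each $\psi_{i,y}\circ h\in\cH_x$ by \eqref{eq:3:8y}, each $\phi_i\circ h$ is a convergent (bounded) combination of elements of $\cH_x$, hence $\phi_i\circ h\in\cH_x$ for all $i\in\Nset$. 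This verifies hypothesis \eqref{eq:4:3}, so Proposition~\ref{prop:1y} applies and yields an infinite-dimensional linear operator $\cC$ for which the identity \eqref{eq:3:9y}, namely $\Psi_y[h(x)]=\cC\Psi_x(x)$, holds (the Riesz-basis version of this existence claim being precisely Corollary~\ref{cor:1y}).

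Next I would extract the explicit formula $\cC=Q_yR_x^{-1}$ by a Gram-matrix argument. Writing \eqref{eq:3:9y} as a column identity, right-multiplying by the row-vector function $[\psi_{1,x},\psi_{2,x},\ldots]$, and integrating over $X$ with respect to $\mu_x$, the left-hand side produces the matrix with $(i,j)$ entry $\langle\psi_{i,y}\circ h,\psi_{j,x}\rangle$, i.e. $Q_y$ as in \eqref{eq:3:11y}, while the right-hand side produces $\cC$ times the matrix with $(i,j)$ entry $\langle\psi_{i,x},\psi_{j,x}\rangle$, i.e. $\cC R_x$ with $R_x$ the Gram matrix \eqref{eq:3:10y}. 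This gives $Q_y=\cC R_x$. Since $\{\psi_{i,x}\}_{i=1}^\infty$ is a Riesz basis for $\cH_x$, its Gram operator $R_x$ is bounded and boundedly invertible, cf. \cite[Theorem~3.6.6]{Christensen2003}, whence $\cC=Q_yR_x^{-1}$.

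The individual steps are essentially routine given the earlier results, so the only point requiring genuine care — exactly as in Theorem~\ref{thm:1} — is twofold: the invertibility (and boundedness) of the infinite-dimensional Gram operator $R_x$, and the legitimacy of interchanging the basis expansions with integration when forming $Q_y=\cC R_x$. Both are secured by the Riesz-basis hypothesis on $\{\psi_{i,x}\}_{i=1}^\infty$, which guarantees that $R_x$ acts as a bounded, boundedly invertible operator and that the relevant series converge in $\cH_x$ so that termwise integration is justified. I expect this to be the main (and essentially only) obstacle; the remainder is a direct transcription of the proof of Theorem~\ref{thm:1} with $F$ replaced by $h$ and $\cH$ replaced by $\cH_x$.
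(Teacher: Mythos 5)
Your proposal is correct and follows essentially the same route as the paper's own proof in Appendix~\ref{a:2}: use the change of basis \eqref{eq:3:cy} to deduce $\phi_i\circ h\in\cH_x$ and invoke Proposition~\ref{prop:1y}, then right-multiply \eqref{eq:3:9y} by $[\psi_{1,x},\psi_{2,x},\ldots]$ and integrate to obtain $Q_y=\cC R_x$, concluding via invertibility of the Gram operator $R_x$ for a Riesz basis. Your added remarks on boundedness of $T_y^{-1}$ and the justification of termwise integration are a slight refinement of, but not a departure from, the paper's argument.
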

For the proofs of Proposition~\ref{prop:1y} and Theorem~\ref{thm:1y} we refer to Appendix~\ref{a:1} and Appendix~\ref{a:2}, respectively.
\begin{remark}[Boundedness of $C_h$ and
local observability]
\label{rem:output_map}
Analogous conditions to
Assumptions~\ref{ass:nonsing}--\ref{ass:bounded}
can be derived to guarantee that the composition operator
$C_h : \mathcal{H}_y \to \mathcal{H}_x$
defined by $(C_h\varphi)(x) = \varphi(h(x))$
for a measurable output map $h: X\to Y$ is a well-defined bounded linear operator.
Specifically, non-singularity and boundedness of
the Radon-Nikodym derivative $w_h = dh_*\mu_x/d\mu_y
\in L^\infty(Y,\mu_y)$ are sufficient for
$C_h$ to be bounded with
$\|C_h\|_{\mathcal{H}_y\to\mathcal{H}_x}
\leq \|w_h\|_{L^\infty}^{1/2}$, by the same
argument as in Theorem~\ref{thm:bounded_CF}. For $C^1$ maps $h: X\to Y$ with
$\dim(Y)\leq\dim(X)$, these conditions reduce via the coarea formula \cite[Section~3.4]{EvansGariepy1992} to
\begin{equation*}
  J_h(x) \;:=\;
  \sqrt{\det\!\left(
  \frac{\partial h}{\partial x}(x)
  \left(\frac{\partial h}{\partial x}(x)
  \right)^\top\right)}\geq c>0,
  %\label{eq:coarea_jacobian}
\end{equation*}
a.e. on $X$. This requires that $\partial h/\partial x(x)$ has full row
rank $\dim(Y)$ a.e., i.e., distinct states are locally distinguishable through $h$ or equivalently, \emph{uniform local observability} holds. Furthermore, similarly as derived for $\cK$, it can be established that the infinite-dimensional matrix $\cC$ defines a well-defined bounded linear operator on $\ell^2$.
\end{remark}

Let $w_t:=\Psi_y(y_t)$ and define the following exact infinite-dimensional bilinear Koopman system with a linear output:
\begin{equation}
\label{eq:4:2}
\begin{split}
z_{t+1}&=\cK (z_t\otimes v_t),\\
w_t&=\Psi_y(y_t)=\Psi_y[h(x_t)]=\cC\Psi_x(x_t)\\
&=\cC z_t.
\end{split}
\end{equation}
Note that Koopman models with lifted outputs have been previously considered in \cite{Park_2024} to study observability properties.

Now we are ready to define a relation between the solutions of the nonlinear system \eqref{eq:4:1} and the infinite-dimensional bilinear Koopman system \eqref{eq:4:2}.
\begin{theorem}
\label{thm:2}
Let $\{\psi_{i,x}\}_{i=1}^\infty$, $\{\psi_{i,u}\}_{i=1}^\infty$ and $\{\psi_{i,y}\}_{i=1}^\infty$ be independent and complete sets of Riesz basis functions spanning $\cH_x$, $\cH_u$ and $\cH_y$, respectively. Let the state transition map $F : X\times U\rightarrow X$ and the output map $h:X\rightarrow Y$ in \eqref{eq:3:1} satisfy:
\begin{equation}
\label{eq:assum}
\begin{split}
&\psi_{i,x}\circ F\in\cH,\quad i\in\Nset,\\
&\psi_{i,y}\circ h\in\cH_x,\quad i\in\Nset.
\end{split}
\end{equation}
Let $\{\varphi_i\}_{i=1}^\infty$ and $\{\phi_i\}_{i=1}^\infty$ be orthonormal bases in $\cH_x$, $\cH_y$, and let $\cK$ and $\cC$ be defined as in \eqref{eq:3:7}, \eqref{eq:4:8}. Assume that the state and output lifting mappings $\Psi_x(x)=\col(\psi_{1,x}(x),\psi_{2,x}(x),\ldots)$ and $\Psi_y(y)=\col(\psi_{1,y}(y),\psi_{2,y}(y),\ldots)$ are injective.

Then $\{x_t,u_t,y_t\}_{t\in\Nset}$  is a solution of system \eqref{eq:4:1} if and only if $\{z_t,v_t,w_t\}_{t\in\Nset}$ is a solution of system \eqref{eq:4:2}.
\end{theorem}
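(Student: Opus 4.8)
The plan is to split the biconditional into its two implications and to observe that the forward implication is essentially a restatement of the operator identities already established, whereas the reverse implication is where the injectivity hypothesis carries the argument. Throughout, I treat $z_t,v_t,w_t$ as the lifts $\Psi_x(x_t),\Psi_u(u_t),\Psi_y(y_t)$ of a single underlying sequence $\{x_t,u_t,y_t\}_{t\in\Nset}$, so that the statement is an equivalence between ``$\{x_t,u_t,y_t\}$ satisfies \eqref{eq:4:1}'' and ``its lift satisfies \eqref{eq:4:2}''. Since the standing assumptions \eqref{eq:assum} are precisely those required by Theorem~\ref{thm:1} and Theorem~\ref{thm:1y}, the identities \eqref{eq:3:9} and \eqref{eq:3:9y} hold with the operators $\cK$ and $\cC$ as specified, and these two identities are the only facts I would invoke.

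For the forward direction I would suppose $\{x_t,u_t,y_t\}_{t\in\Nset}$ solves \eqref{eq:4:1} and simply substitute. Evaluating \eqref{eq:3:9} at $(x_t,u_t)$ and using $x_{t+1}=F(x_t,u_t)$ gives $z_{t+1}=\Psi_x(x_{t+1})=\Psi_x(F(x_t,u_t))=\cK(\Psi_x(x_t)\otimes\Psi_u(u_t))=\cK(z_t\otimes v_t)$, while evaluating \eqref{eq:3:9y} at $x_t$ and using $y_t=h(x_t)$ gives $w_t=\Psi_y(y_t)=\Psi_y(h(x_t))=\cC\Psi_x(x_t)=\cC z_t$. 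Hence the lift satisfies \eqref{eq:4:2}, and no injectivity is needed here.

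For the reverse direction I would suppose the lifted sequence satisfies \eqref{eq:4:2} and run the same identities backwards. Applying \eqref{eq:3:9} at $(x_t,u_t)$ yields $\Psi_x(F(x_t,u_t))=\cK(z_t\otimes v_t)$, while \eqref{eq:4:2} gives $\cK(z_t\otimes v_t)=z_{t+1}=\Psi_x(x_{t+1})$; combining the two gives $\Psi_x(x_{t+1})=\Psi_x(F(x_t,u_t))$, so injectivity of $\Psi_x$ forces $x_{t+1}=F(x_t,u_t)$. Symmetrically, \eqref{eq:3:9y} at $x_t$ gives $\Psi_y(h(x_t))=\cC z_t=w_t=\Psi_y(y_t)$, and injectivity of $\Psi_y$ yields $y_t=h(x_t)$. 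Thus $\{x_t,u_t,y_t\}$ solves \eqref{eq:4:1}, closing the equivalence.

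The conceptual crux, and the only place the hypotheses are used in full strength, is the reverse direction: injectivity of the lifting maps is exactly what lets me invert the lift to recover the original state-transition and output relations from their lifted counterparts. I expect the main delicacy to be measure-theoretic rather than algebraic, namely that \eqref{eq:3:9} and \eqref{eq:3:9y} are a priori $L^2$ equalities holding $\mu_x\otimes\mu_u$- and $\mu_x$-almost everywhere, whereas the argument evaluates them along one specific trajectory. I would close this gap by restricting to continuous observables and continuous $F,h$ on (compact) domains, as in the kernel, DNN and Takens embeddings of Section~\ref{sec:3} (cf. Remark~\ref{rem:2}), which makes the identities valid pointwise and so legitimizes evaluation at each $(x_t,u_t)$.
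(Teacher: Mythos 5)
Your proposal is correct and follows essentially the same route as the paper: the forward direction is direct substitution using the operator identities $\Psi_x(F(x,u))=\cK(\Psi_x(x)\otimes\Psi_u(u))$ and $\Psi_y(h(x))=\cC\Psi_x(x)$, and the reverse direction uses injectivity of $\Psi_x$ and $\Psi_y$ to recover $x_{t+1}=F(x_t,u_t)$ and $y_t=h(x_t)$, exactly as in the paper's proof. Your closing observation that these identities are a priori only $L^2$ (almost-everywhere) equalities, so that pointwise evaluation along a trajectory needs continuity of the observables and of $F,h$, is a genuine subtlety the paper's proof passes over silently.
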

\begin{proof}
Suppose that $\{x_t,u_t,y_t\}_{t\in\Nset}$ is a solution of system \eqref{eq:4:1}. Then from Proposition~\ref{prop:1}-Corollary~\ref{cor:1} and Proposition~\ref{prop:1y}-Corollary~\ref{cor:1y} we obtain that $\{z_t,v_t,w_t\}_{t\in\Nset}=\{\Psi_x(x_t),\Psi_u(u_t),\Psi_y(y_t)\}_{t\in\Nset}$ is a solution of system \eqref{eq:4:2}. Note that this part of the proof does not require that the mappings $\Psi_x, \Psi_y$ are injective.

Conversely, let $\{x_t,u_t,y_t\}_{t\in\Nset}$ be such that $\{z_t,v_t,w_t\}_{t\in\Nset}$ with $z_t=\Psi_x(x_t)$, $v_t=\Psi_u(u_t)$ and $w_t=\Psi_y(y_t)$ is a solution of system \eqref{eq:4:2}. For all $t\in\Nset$ let $\hat x_{t+1}=F(x_t,u_t)$ and let $\hat y_t=h(x_t)$ and suppose that $\hat x_{t+1}\neq x_{t+1}$ and $\hat y_t\neq y_t$ for some $t\in\Nset$. Then, by Corollary~\ref{cor:1} and Corollary~\ref{cor:1y} we have that $\Psi_x(\hat x_{t+1})=\cK\Psi(x_t,u_t)=\Psi_x(x_{t+1})$ and $\Psi_y(\hat y_t)=\cC\Psi_x(x_t)=\Psi_y(y_t)$. If the mappings $\Psi_x$ and $\Psi_y$ are injective, then it must hold that $\hat x_{t+1}=x_{t+1}$ and $\hat y_t = y_t$ and thus, $\{x_t,u_t,y_t\}_{t\in\Nset}$ is a solution of system \eqref{eq:4:1}.
\end{proof}

\subsection{The nonlinear fundamental lemma}
Let $N_d \in \mathbb{N}$ data sets be
collected from system \eqref{eq:4:1}, each from initial condition $x_0^{(i)}$,
$i=1,\ldots,N_d$, with trajectory length $T_i+N$,
giving lifted trajectories
$z_t^{(i)} = \Psi_x(x_t^{(i)}) \in \ell^2$,
$v_t^{(i)} = \Psi_u(u_t^{(i)}) \in \ell^2$,
$w_t^{(i)} = \Psi_y(y_t^{(i)}) \in \ell^2$.
The $N$-step lifted sequence from data set $i$ at time $s$ is
\begin{equation*}
  \Phi_s^{(i)} \;:=\;
  \operatorname{col}\!\left(
    z_s^{(i)}\otimes v_s^{(i)},\;\ldots,\;
    z_{s+N-1}^{(i)}\otimes v_{s+N-1}^{(i)}
  \right) \;\in\; \ell^2.
  %\label{eq:window_i}
\end{equation*}
\begin{definition}[Multiple-data-sets Hankel operator]
\label{def:hankel_multi}
Let $\mathcal{T} := \bigoplus_{i=1}^{N_d}\mathcal{G}_i$
with $\mathcal{G}_i = \mathbb{R}^{T_i}$ for
$T_i<\infty$ and $\mathcal{G}_i = \ell^2$ for
$T_i=\infty$, with elements
$g = (g^{(1)},\ldots,g^{(N_d)})$,
$g^{(i)} = (g_0^{(i)},g_1^{(i)},\ldots)$.
Define the \emph{multiple-data-sets Hankel operator} as
\begin{equation}
  \mathcal{F}_N : \mathcal{T}\to\ell^2,
  \qquad
  \mathcal{F}_N g
  \;:=\; \sum_{i=1}^{N_d}\sum_{s=0}^{T_i-1}
         g_s^{(i)}\,\Phi_s^{(i)},
  \label{eq:hankel_multi}
\end{equation}
where, for $T_i=\infty$, the inner sum converges
in $\ell^2$ by square-summability of
$g^{(i)}\in\ell^2$.
\end{definition}
\begin{definition}
\label{def:behavior}
The \emph{$N$-step truncated behavior} of the
bilinear Koopman system \eqref{eq:4:2} is defined
as the set of all $N$-step admissible data sequences
$\Phi_t := \operatorname{col}\!\left(
z_t\otimes v_t,\,z_{t+1}\otimes v_{t+1},\,\ldots,\,
z_{t+N-1}\otimes v_{t+N-1}\right)\in\ell^2$:
\begin{align}
    &\mathcal{B}_N:=\nonumber\\
  &\{ \Phi_t \in\ell^2   \;|\;  z_t\in\ell^2,\;
  v_{[t,t+N-1]}\in\ell^2,\;
  z_{i+1} = \cK(z_{i}\otimes v_{i})\},\notag\\
  &i=t,\ldots,t+N-1.\label{eq:behavior}
\end{align}
\end{definition}
Let $\mathcal{I} := \{(i,s) :
i=1,\ldots,N_d,\; s=0,\ldots,T_i-1\}$.
\begin{definition}[Persistency of Excitation]
\label{def:PE}
The data collection $\{\Phi_s^{(i)}\}_{(i,s)\in \mathcal{I}}$
is \emph{persistently exciting} of order $N$ if
\begin{equation}
\label{eq:PE_def}
  \mathcal{B}_N \subseteq \mathcal{R}(\mathcal{F}_N),
\end{equation}
where the range of $\mathcal{F}_N:\mathcal{T}\to\ell^2$
is
\begin{equation*}
  \mathcal{R}(\mathcal{F}_N)
  := \left\{
  \sum_{(i,s)\in \mathcal{I}} g_s^{(i)}\Phi_s^{(i)}
  \;:\; g\in\mathcal{T}
  \right\},
\end{equation*}
i.e., the set of all $\ell^2$-convergent linear
combinations of data sequences with
$g\in\mathcal{T}$.
\end{definition}

The key question for deriving the nonlinear
fundamental lemma is whether any admissible
$N$-step trajectory $\Phi\in\mathcal{B}_N$ can
be represented as $\mathcal{F}_Ng$ for some
$g\in\mathcal{T}$. The PE
condition~\eqref{eq:PE_def} alone does not
guarantee well-posedness in infinite dimensions,
since $\mathcal{R}(\mathcal{F}_N)$ may not be
closed, i.e., a sequence
$\Phi^{(n)} = \mathcal{F}_N g^{(n)}$ may converge
in $\ell^2$ to a limit outside
$\mathcal{R}(\mathcal{F}_N)$.
Hence, we strengthen the PE condition as follows.
\begin{assumption}[Quantitative PE]\label{ass:spanning}
The data sequences $\{\Phi_s^{(i)}\}_{(i,s)\in\mathcal{I}}$
form a \emph{frame}~\cite[Definition~5.1.1]{Christensen2016}
for $\mathcal{B}_N$, i.e.\ there exist
$0 < \alpha_F \leq \beta_F < \infty$ such that
\begin{equation}
  \alpha_F\|\Phi\|_{\ell^2}^2
  \;\leq\;
  \sum_{(i,s)\in\mathcal{I}}
  |\langle\Phi_s^{(i)},\Phi\rangle_{\ell^2}|^2
  \;\leq\;
  \beta_F\|\Phi\|_{\ell^2}^2,
  \quad\forall\,\Phi\in\mathcal{B}_N.
  \label{eq:frame_cond}
\end{equation}
\end{assumption}
Under Assumption~\ref{ass:spanning}, the Hankel
operator $\mathcal{F}_N$ in
Definition~\ref{def:hankel_multi} serves as the
synthesis operator of the frame
$\{\Phi_s^{(i)}\}_{(i,s)\in\mathcal{I}}$.
The upper bound in~\eqref{eq:frame_cond} implies
that $\{\Phi_s^{(i)}\}_{(i,s)\in\mathcal{I}}$ is
a Bessel sequence with bound $\beta_F$, so
$\mathcal{F}_N$ is bounded with
$\|\mathcal{F}_N\|^2 \leq
\beta_F$~\cite[Theorem~3.2.3]{Christensen2016}.
The lower bound guarantees that
$\mathcal{R}(\mathcal{F}_N)$ is closed and satisfies
$\mathcal{R}(\mathcal{F}_N) =
\overline{\operatorname{span}}_{\ell^2}
\{\Phi_s^{(i)}\}_{(i,s)\in\mathcal{I}}$~\cite[Lemma~5.2.1]{Christensen2016}.
The adjoint $\mathcal{F}_N^*:\ell^2\to\mathcal{T}$
is the analysis operator
\begin{equation*}
  [\mathcal{F}_N^*\Phi]_s^{(i)}
  \;=\;
  \langle\Phi_s^{(i)},\Phi\rangle_{\ell^2},
  \quad (i,s)\in\mathcal{I}.
  %\label{eq:FN_adjoint}
\end{equation*}
This yields the frame operator
$S_F := \mathcal{F}_N\mathcal{F}_N^*:
\ell^2\to\ell^2$~\cite[eq.~(5.5)]{Christensen2016},
which is bounded, self-adjoint, and positive
by~\cite[Lemma~5.1.5]{Christensen2016}.
The lower frame bound gives
$\langle S_F\Phi,\Phi\rangle_{\ell^2} \geq
\alpha_F\|\Phi\|_{\ell^2}^2$ for all
$\Phi\in\mathcal{B}_N$, so $S_F$ is boundedly
invertible on $\mathcal{B}_N$ with
$\|S_F^{-1}\| \leq \alpha_F^{-1}$.
\begin{proposition}
\label{prop:frame_spanning}
Assumption~\ref{ass:spanning} implies the PE
condition~\eqref{eq:PE_def}. Moreover, for every
$\Phi\in\mathcal{B}_N$, the equation
$\mathcal{F}_Ng = \Phi$ has the explicit
minimum-norm solution
\begin{equation*}
  g \;=\; \mathcal{F}_N^*S_F^{-1}\Phi
  \;\in\;\mathcal{T}.
  %\label{eq:frame_solution}
\end{equation*}
\end{proposition}
\begin{proof}
For any $\Phi\in\mathcal{B}_N$, $S_F$ is boundedly
invertible on $\mathcal{B}_N$ with
$\|S_F^{-1}\|\leq\alpha_F^{-1}$, so
$S_F^{-1}\Phi\in\ell^2$ is well defined and
$g := \mathcal{F}_N^*S_F^{-1}\Phi\in\mathcal{T}$.
Then
\begin{equation*}
  \mathcal{F}_Ng
  = \mathcal{F}_N\mathcal{F}_N^*S_F^{-1}\Phi
  = S_FS_F^{-1}\Phi
  = \Phi,
\end{equation*}
so $\Phi\in\mathcal{R}(\mathcal{F}_N)$ and the
PE condition~\eqref{eq:PE_def} holds.
\end{proof}
\begin{remark}[Relation to Willems' fundamental lemma]
\label{rem:willems}
For a linear system $x_{t+1}=Ax_t+Bu_t$,
$y_t=Cx_t+Du_t$, the $N$-step
data sequences are defined as
$\Phi_s := \operatorname{col}(u_{[s,s+N-1]},\,
y_{[s+1,s+N]})\in\mathbb{R}^{(m+p)N}$,
and the truncated behavior reduces to the
finite-dimensional subspace
\begin{equation*}
  \begin{split}
  \mathcal{B}_N^{\mathrm{IO}} \;:=\;\{& \Phi_s\in\mathbb{R}^{(m+p)N} \;|\;  \exists\,x_0\in\mathbb{R}^n\;\text{s.t.}\;\\
  &x_{t+1}=Ax_t+Bu_t,\;y_t=Cx_t+Du_t \}
  \end{split}
  \end{equation*}
of dimension $mN+n$.
With a single dataset ($N_d=1$), the Hankel operator
reduces to the finite matrix
$H_{N,T}\in\mathbb{R}^{(m+p)N\times T}$
with columns $\Phi_s$, $s=0,\ldots,T-1$,
and the PE condition
$\mathcal{B}_N^{\mathrm{IO}}\subseteq
\mathcal{R}(H_{N,T})$
reduces to the rank condition
\begin{equation}
  \operatorname{rank}(H_{N,T})
  \;=\; \dim(\mathcal{B}_N^{\mathrm{IO}})
  \;=\; mN+n.
  \label{eq:PE_linear}
\end{equation}
This rank condition is achieved when the input
is PE of order $N+n$ and $(A,B)$ is
controllable~\cite{WillemsRapisarda2005}.
In this finite-dimensional setting, the columns
of $H_{N,T}$ form a frame for
$\mathcal{B}_N^{\mathrm{IO}}$ if and only if
the rank condition~\eqref{eq:PE_linear} holds,
with frame bounds
\begin{equation}
  \alpha_F = \sigma_{\min}^2(H_{N,T}),
  \qquad
  \beta_F = \sigma_{\max}^2(H_{N,T}),
  \label{eq:alpha_F_linear}
\end{equation}
corresponding to the eigenvalues of the
finite-dimensional frame operator
$S_F = H_{N,T}H_{N,T}^\top$.
The quantitative PE bound $\alpha_F$ coincides
with the PE notion introduced
in~\cite{CoulsonVanWaarde2023}.
Thus Definition~\ref{def:PE} generalises
\eqref{eq:PE_linear} to infinite dimensions, and
Assumption~\ref{ass:spanning} is the corresponding
generalisation of the quantitative PE
of~\cite{CoulsonVanWaarde2023}, both fitted to the
bilinear Koopman system~\eqref{eq:4:2}.
\end{remark}
\begin{theorem}[Nonlinear Fundamental Lemma]
\label{thm:fundlemma}
Let $\{\psi_{i,x}\}_{i=1}^\infty$,
$\{\psi_{i,u}\}_{i=1}^\infty$,
$\{\psi_{i,y}\}_{i=1}^\infty$ be Riesz bases for
$\mathcal{H}_x$, $\mathcal{H}_u$, $\mathcal{H}_y$
with $\psi_{i,x}\circ F\in\mathcal{H}$,
$\psi_{i,y}\circ h\in\mathcal{H}_x$ for all
$i\in\mathbb{N}$, let $\Psi_x$, $\Psi_y$ be
injective, and let
Assumption~\ref{ass:spanning} hold for
$\{\Phi_s^{(i)}\}_{(i,s)\in\mathcal{I}}$.
Define the \emph{augmented Hankel operator}
$\mathcal{F}_N^w : \mathcal{T}\to\ell^2$ by
$\mathcal{F}_N^w g :=
\sum_{(i,s)\in\mathcal{I}}
g_s^{(i)}\Phi_s^{w,(i)}$,
where $\Phi_s^{w,(i)} :=
\operatorname{col}(\Phi_s^{(i)},\,
w_{[s+1,s+N]}^{(i)})\in\ell^2$
appends the lifted output sequence to $\Phi_s^{(i)}$.

Then for any $t\in\mathbb{N}$,
$\{x_{[t,t+N]},u_{[t,t+N-1]},y_{[t+1,t+N]}\}$
is a solution of~\eqref{eq:4:1} if and only if
there exists $g\in\mathcal{T}$ such that
\begin{equation}
  \mathcal{F}_N^w g
  \;=\;
  \begin{pmatrix}
    (z\otimes v)_{[t,t+N-1]}\\[4pt]
    w_{[t+1,t+N]}
  \end{pmatrix}.
  \label{eq:fundlemma}
\end{equation}
\end{theorem}
\begin{proof}
Recall that $\mathcal{K}: \ell^2 \to \ell^2$ and
$\mathcal{C}: \ell^2 \to \ell^2$ denote the
coordinate representations of the composition
operators $C_F$ and $C_h$ in corresponding Riesz bases. Define
\begin{equation}
  \mathcal{M}:\ell^2\to\ell^2,\quad \mathcal{M} \;:=\;
  \begin{pmatrix}
    I_{\ell^2}         & 0      & \cdots & 0      \\
    \vdots             &        & \ddots & \vdots \\
    0                  & 0      & \cdots & I_{\ell^2} \\
    \mathcal{C}\mathcal{K} & 0      & \cdots & 0      \\
    \vdots             &        & \ddots & \vdots \\
    0                  & 0      & \cdots & \mathcal{C}\mathcal{K}
  \end{pmatrix},
\end{equation}
with $N$ identity blocks and $N$ blocks of
$\mathcal{C}\mathcal{K}$, where
$\mathcal{C}\mathcal{K}: \ell^2\to\ell^2$
maps the coordinate sequence of
$(z_t\otimes v_t)$ to the coordinate sequence of
$w_{t+1}$ via the Riesz basis representations.
Then $\Phi_s^{w,(i)} = \mathcal{M}\Phi_s^{(i)}$
and $\mathcal{F}_N^w = \mathcal{M}\mathcal{F}_N$.

\emph{($\Rightarrow$)}
If $\{x_{[t,t+N]},u_{[t,t+N-1]},y_{[t+1,t+N]}\}$
is a solution of~\eqref{eq:4:1}, then by
Theorem~\ref{thm:bounded_CF} the lifted trajectory
satisfies $z_{i+1} = \cK(z_i\otimes v_i)$,
$w_i = \cC z_i$, $i=t,\ldots,t+N-1$, so
$(z\otimes v)_{[t,t+N-1]}\in\mathcal{B}_N$.
By Assumption~\ref{ass:spanning} and
Proposition~\ref{prop:frame_spanning} there
exists $g\in\mathcal{T}$ with
$\mathcal{F}_Ng = (z\otimes v)_{[t,t+N-1]}$.
Applying $\cM$ to both sides and using
$w_{i+1} = \cC\cK(z_{i}\otimes v_{i})$ yields
\begin{align*}
  \mathcal{F}_N^w g &= \cM\mathcal{F}_Ng
  = \cM(z\otimes v)_{[t,t+N-1]}\\
  &= \operatorname{col}\!\left(
    (z\otimes v)_{[t,t+N-1]},\,
    w_{[t+1,t+N]}\right).
\end{align*}

\emph{($\Leftarrow$)}
Suppose~\eqref{eq:fundlemma} holds for some
$g\in\mathcal{T}$. Splitting into top and bottom
blocks gives
\begin{align*}
  \mathcal{F}_N g &= (z\otimes v)_{[t,t+N-1]},\\
  \operatorname{diag}(\mathcal{C}\mathcal{K},
  \ldots,\mathcal{C}\mathcal{K})\,\mathcal{F}_N g
  &= w_{[t+1,t+N]}.
\end{align*}
Since $\cK:\ell^2\to\ell^2$ is bounded and linear,
the operator $L:\ell^2\to\ell^2$, $L\Phi_t :=
\operatorname{col}(z_{i+1} -
\cK(z_i\otimes v_i))_{i=t}^{t+N-1}$ is bounded
and linear as the difference of two bounded linear
maps, and $\mathcal{B}_N = \ker(L)$ is therefore
a closed linear subspace of
$\ell^2$, see, e.g.,~\cite[proof of Thm.~4.12]{Rudin1987}.
Since each data sequence $\Phi_s^{(i)}\in\mathcal{B}_N$
by construction, and $\mathcal{B}_N$ is closed,
the top block
$(z\otimes v)_{[t,t+N-1]} = \mathcal{F}_N g
\in\mathcal{B}_N$, so the Koopman dynamics
$z_{i+1} = \cK(z_i\otimes v_i)$ hold for
$i=t,\ldots,t+N-1$. The bottom block then gives
$w_{i+1} = \cC\cK(z_i\otimes v_i) = \cC z_{i+1}$
for each $i=t,\ldots,t+N-1$.
Hence the full sequence
$\{z_{[t,t+N]}, v_{[t,t+N-1]}, w_{[t+1,t+N]}\}$
is a solution of system~\eqref{eq:4:2}, and
by Theorem~\ref{thm:2}, injectivity of $\Psi_x$
and $\Psi_y$ uniquely recovers
$\{x_{[t,t+N]}, u_{[t,t+N-1]}, y_{[t+1,t+N]}\}$
as a solution of~\eqref{eq:4:1}.
\end{proof}
\begin{remark}[Data generation guidelines]
\label{rem:data_generation}
For universal kernels $k_x$, $k_u$ on compact
$X$, $U$~\cite{Micchelli_2006}, the kernel feature
maps generate function systems $\{\psi_{i,x}\}_{i=1}^\infty$,
$\{\psi_{l,u}\}_{l=1}^\infty$ that can be chosen to form Riesz
bases for $\mathcal{H}_{k_x}$ and $\mathcal{H}_{k_u}$,
with tensor products $\{\psi_{i,x}\psi_{l,u}\}$
forming a Riesz basis for $\mathcal{H}=
\mathcal{H}_{k_x}\otimes\mathcal{H}_{k_u}$.
Using such kernels with well-distributed initial
conditions $\{x_0^{(i)}\}_{i=1}^{N_d}$ in $X$ and persistently
exciting input sequences of order $N$ in $U$
creates dense subspaces in $\ell^2$, which favors
the construction of a frame for $\mathcal{B}_N$
and hence the satisfaction of
Assumption~\ref{ass:spanning} with $\alpha_F>0$.
Multiple data sets with independently chosen
initial conditions and inputs further facilitate
the required coverage of $\mathcal{B}_N$.
In the finite-dimensional EDMD case, this yields
a strong quantifiable PE condition for bilinear
systems analogous to merging~\cite{CoulsonVanWaarde2023},
the bilinear fundamental lemma~\cite{Yuan_Cortes_Bilinear}
and its multiple data sets
extension~\cite{Timm_bilinear}.
\end{remark}

\begin{remark}[Multi-step approximation]
\label{rem:multistep}
In the finite-dimensional case with $n_z$ state
observables and $n_v$ input observables,
the lifted trajectories $z_t\in\mathbb{R}^{n_z}$,
$v_t\in\mathbb{R}^{n_v}$ and the Hankel operator
reduces to the finite matrix
$\mathcal{F}_N\in\mathbb{R}^{n_zn_vN\times
\sum_i T_i}$. If $\mathcal{F}_N$ has full row rank,
the system~\eqref{eq:fundlemma} has the
minimum-norm least-squares solution
\begin{equation}
  g^\ast \;:=\;
  \mathcal{F}_N^\dagger
  (z\otimes v)_{[t,t+N-1]},
  \label{eq:ls_sol}
\end{equation}
where $\mathcal{F}_N^\dagger$ is the
Moore-Penrose pseudoinverse, and the predicted
lifted output sequence is
\begin{equation}
\begin{split}
  w^\ast_{[t+1,t+N]}
  =\underbrace{
  \begin{bmatrix}
    w_{[s+1,s+N]}^{(i)}
  \end{bmatrix}_{(i,s)\in\mathcal{I}}
  \mathcal{F}_N^\dagger
  }_{\text{finite-dim.\ approx.\ of }
  \operatorname{diag}(\cC\cK,\ldots,\cC\cK)}
  (z\otimes v)_{[t,t+N-1]}.
  \end{split}
  \label{eq:ms_pred}
\end{equation}
This generalizes EDMD to multi-step
prediction directly from data, without explicitly
identifying $\cK$, and is well-suited for
Koopman MPC with prediction
horizon $N$~\cite{Korda_2018_Koop}.
\end{remark}

\section{Illustrative examples}
\label{sec:5}
All results in this section were generated using a 13th Gen Intel® CoreTM i7-1370P CPU running at 1.90 GHz on a Lenovo Windows Laptop and using Matlab 2023a.
\subsection{Van der Pol Oscillator}
We illustrate the developed generalized Koopman operator method for the Van der Pol oscillator. The training data set is generated by simulating the forced Van der Pol oscillator
\begin{equation}
\label{eq:5:1}
    \dot{x}_1 = x_2, \qquad
    \dot{x}_2 = \mu(1-x_1^2)x_2 - x_1 + u,
\end{equation}
with $\mu = 1.2$, discretized at sampling time $T_s = 0.1$\,s via \texttt{ode45}.

A total of $N_{d} = 100$ initial conditions are drawn from
$X=[-2,2]^2$ using a scrambled Halton sequence, and $N_{\mathrm{seq}} = 10$
persistently exciting multisine input sequences of length $T_{\mathrm{seq}} = 16$
steps, with amplitudes in $U=[-1,1]$, are applied to each initial condition,
yielding a database of $N_{d} \times N_{\mathrm{seq}} \times
T_{\mathrm{seq}} = 16000$ one-step transition pairs
$(x_t, u_t, x_{t+1})$.
From this database, $N_{\mathrm{train}} = 15000$ training pairs and
$N_{\mathrm{val}} = 100$ validation pairs are selected using $D^2$ landmark sampling~\cite{Arthur2007}
on the joint state-input space $X\times U$. Two kernel-based EDMD variants are implemented and compared.
\paragraph{Standard joint-kernel EDMD - kernelized KIC~\cite{Brunto_Koopmanu}}
The observable map lifts the joint state-input pair as
\begin{equation}
    \Psi(x,u) = \bigl[k_\sigma([x;u], c_1), \ldots,
    k_\sigma([x;u], c_{N_b})\bigr]^\top \in \mathbb{R}^{N_b},
\end{equation}
where $c_i = [x_i; u_i] \in \mathbb{R}^{n+m}$ are $N_b = 5000$ or $N_b = 10000$
randomly selected joint landmark pairs and $k_\sigma$ is the inverse quadratic (IQ) kernel, i.e.,
\begin{equation*}
    k_\sigma(z, c_i) = \frac{1}{1 + \|z - c_i\|^2 / \sigma^2},
    \qquad z = \begin{bmatrix} x \\ u \end{bmatrix} \in \mathbb{R}^{n+m},
\end{equation*}
with a single bandwidth $\sigma = 1$. The regression target is
$\Psi(x^+, u)$, i.e., the same joint lifting evaluated
at the next state while keeping the current input, so that the
identified operator $\hat\cK \in \mathbb{R}^{N_b \times N_b}$ satisfies
\begin{equation*}
    \hat\cK= \Psi_{Z^+}\Psi_Z^\top(\Psi_Z \Psi_Z^\top + \gamma I)^{-1},
\end{equation*}
where $\Psi_Z = [\Psi(z_1),\ldots,\Psi(z_N)]$
with $z_k = [x_k; u_k]$ and
$\Psi_{Z^+} = [\Psi(z_1^+),\ldots,\Psi(z_N^+)]$
with $z_k^+ = [x_k^+; u_k]$; $\gamma=1e-6$.

\paragraph{Khatri--Rao EDMD - Generalized Koopman}
The observable map instead factorizes the state and input liftings
independently via
\begin{equation*}
    \Psi(x,u)
    = \Psi_x(x) \otimes \Psi_u(u)
    \in \mathbb{R}^{M_x M_u},
\end{equation*}
using $M_x = 100$ or $M_x=200$ state landmarks with bandwidth $\sigma_x = 1$ and
$M_u = 50$ input landmarks with bandwidth $\sigma_u = 0.54$, selected
independently from the training data. The state and input kernels are defined independently on their
respective spaces, with centers
$c_i^x \in \mathbb{R}^{n}$ and $c_j^u \in \mathbb{R}^{m}$:
\begin{align*}
    k_{\sigma_x}(x, c_i^x) &= \frac{1}{1 + \|x - c_i^x\|^2 / \sigma_x^2},
    \\
    k_{\sigma_u}(u, c_j^u) &= \frac{1}{1 + \|u - c_j^u\|^2 / \sigma_u^2},
\end{align*}
with $\sigma_x = 1$ and $\sigma_u = 0.54$, giving the vectors of observables
\begin{align*}
    \Psi_x(x) &= \bigl[k_{\sigma_x}(x,c_1^x),\ldots,
    k_{\sigma_x}(x,c_{M_x}^x)\bigr]^\top \in \mathbb{R}^{M_x}, \\
    \Psi_u(u) &= \bigl[k_{\sigma_u}(u,c_1^u),\ldots,
    k_{\sigma_u}(u,c_{M_u}^u)\bigr]^\top \in \mathbb{R}^{M_u}.
\end{align*}
This implies that the total number of features is the same for both methods, i.e., $5000$ and $10000$, respectively, selected out of $15000$ data points, which ensures a well-posed regression problem. The regression target is the state-only lifting $\Psi_x(x^+) \in \mathbb{R}^{M_x}$,
yielding $\hat\cK \in \mathbb{R}^{M_x M_u \times M_x}$, computed as in equation \eqref{eq:d:3} with regularization weight $\gamma=1e-6$.

For each method we compute the absolute error between the lifted state in the corresponding Hilbert space obtained by lifting the true trajectory of the Van der Pol oscillator versus the trajectory propagated in time by the corresponding Koopman operator, i.e., KIC and generalized Koopman, respectively.
\begin{figure}[!t]
     \centering
    \subfloat[$5000$ $\col(x,u)$-observables]{%
        \includegraphics[width=0.75\linewidth]{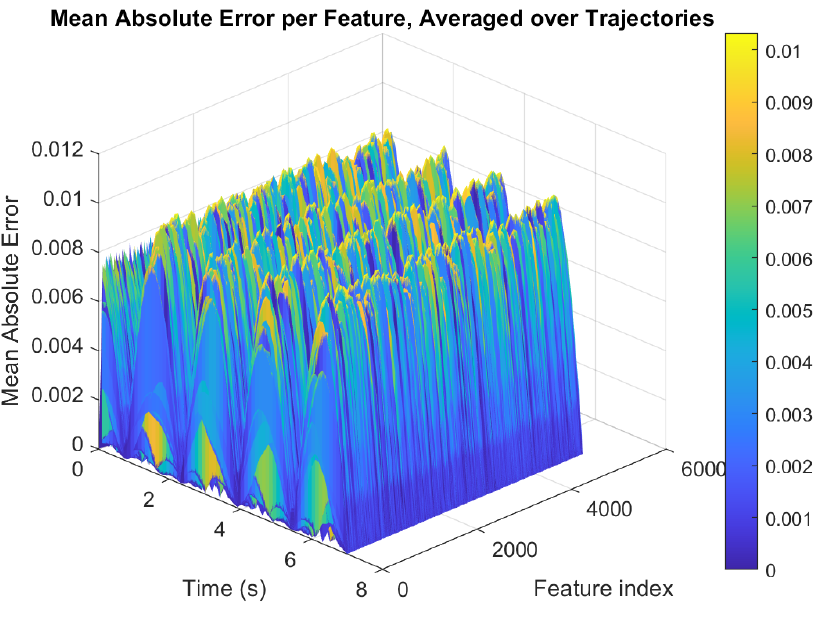}
    }\\
    \subfloat[$10000$ $\col(x,u)$-observables]{%
        \includegraphics[width=0.75\linewidth]{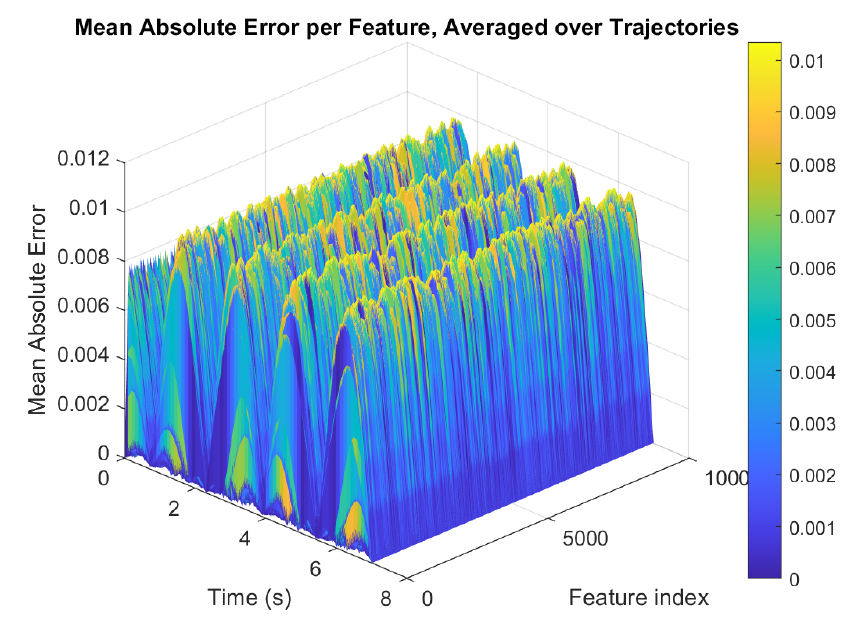}
    }
    \caption{Kernelized KIC method \cite{Brunto_Koopmanu}: Absolute feature error 3D plot averaged over 100 test trajectories.}
    \label{fig:5:1}
    \end{figure}
Figure~\ref{fig:5:1} shows the absolute error plots for the KIC method for $5000$ observables and $10000$ observables, respectively. We observe that the absolute feature error does not improve noticeably despite doubling the dimension of the feature space.
\begin{figure}[!t]
     \centering
    \subfloat[$100$ $x$-observables, $50$ $u$-observables]{%
        \includegraphics[width=0.75\linewidth]{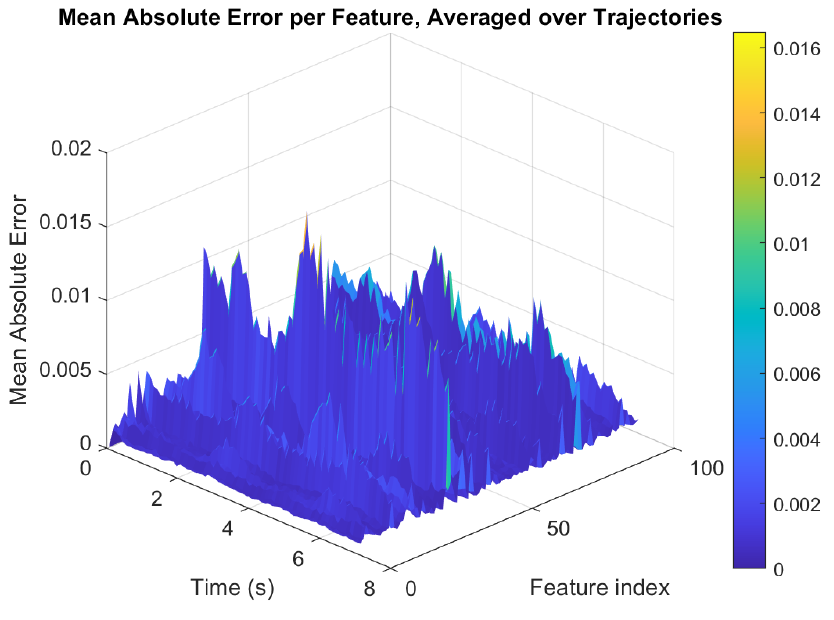}
    }\\
    \subfloat[$200$ $x$-observables, $50$ $u$-observables]{%
        \includegraphics[width=0.75\linewidth]{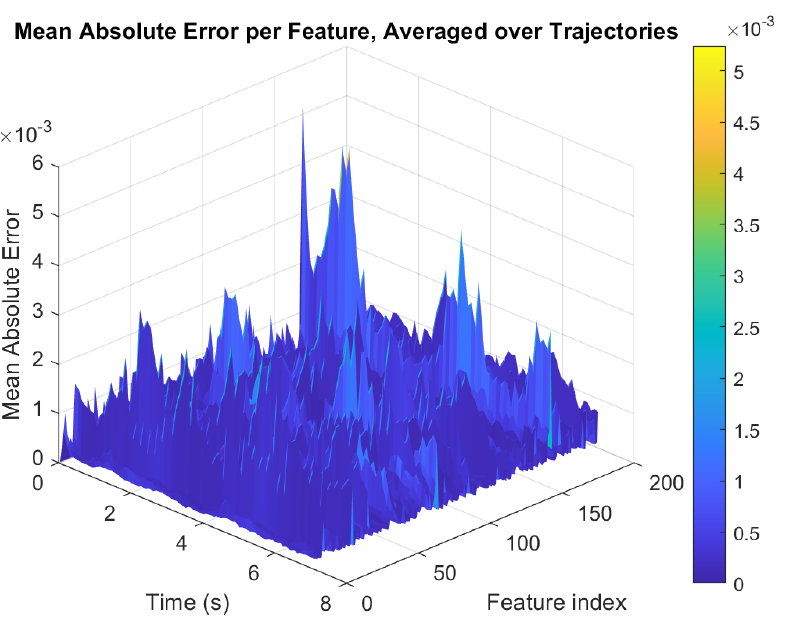}
        }
    \caption{Generalized Koopman: Absolute feature error 3D plot averaged over 100 test trajectories.}
    \label{fig:5:2}
    \end{figure}

Figure~\ref{fig:5:2} shows the absolute error plots for the developed generalized Koopman operator for $100$ and $200$ state observables, respectively, and $50$ input observables in all cases. The dimension of the product Hilbert space is thus $5000$ and $10000$, respectively. As predicted by the theory developed in this paper, we observe that the absolute error consistently improves with the dimension of the feature space, which is also confirmed by the statistical plots shown in Figure~\ref{figsr1}.
\begin{figure}[!t]
\centerline{\includegraphics[width=0.95\columnwidth]{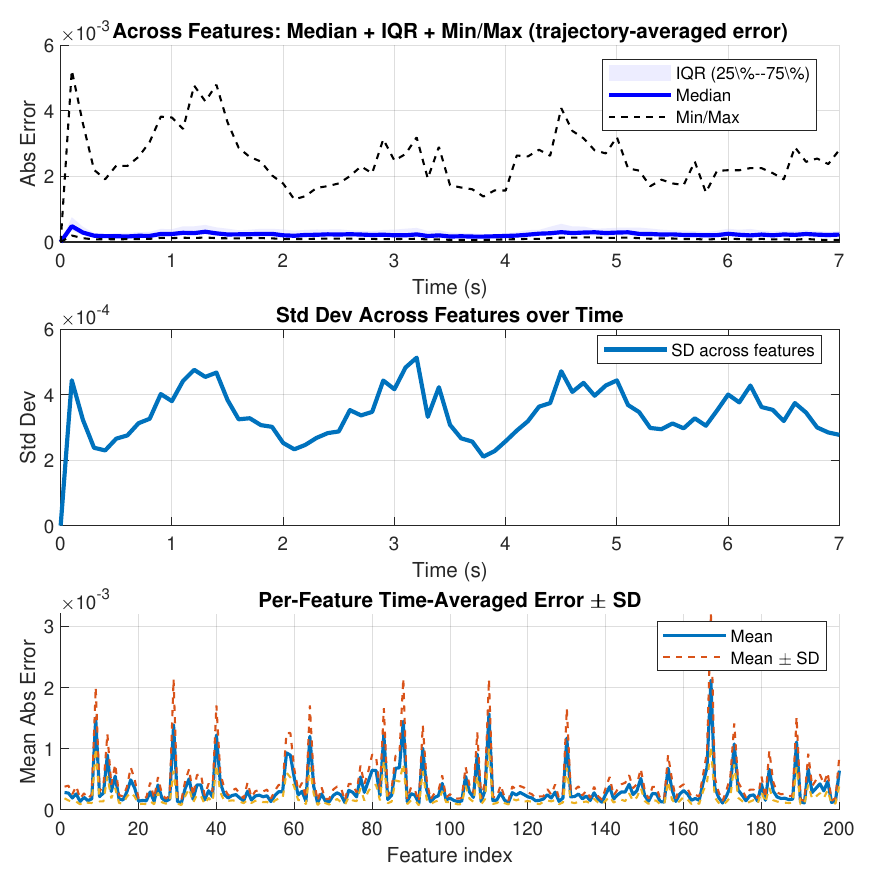}}
\caption{Statistical analysis of the feature error for $200$ $x$-observables, $50$ $u$-observables.}
\label{figsr1}
\end{figure}
\subsection{Soft-robotic manipulator}
As a second illustrative example we consider a non-input-affine nonlinear model motivated by
pneumatically driven soft-robotic manipulators~\cite{yang2024modelbased}. The state vector is $x = [\theta,\,\omega,\,p]^\top$, where $\theta$ is
the bending angle, $\omega=\dot\theta$ is its rate, and $p$ is the internal
chamber pressure. The dynamics are
\begin{align}
  \dot\theta &= \omega, \nonumber\\
  \dot\omega &= -d\,\omega - k\,\theta + \alpha\,p^2, \label{eq:plant}\\
  \dot p      &= a\,\tanh(6\,u)\sqrt{\max(p_s-p,0)}
                 - b\sqrt{\max(p,0)} - c\,p, \nonumber\\
                 y &= \ell\sin\theta,\nonumber
\end{align}
with plant parameters $d=0.8$, $k=2$, $\alpha=0.35$,
$a=3$, $b=1.1$, $c=0.25$ and supply pressure $p_s=5$.

The output of interest is the tip displacement with link length $\ell=1$. Equations are integrated with the forward Euler method at sample time $T_s = 0.05\,\mathrm{s}$. The model in~\eqref{eq:plant} captures three dominant dynamic phenomena of
soft pneumatic actuators: elastic restoring forces
($k\theta$), viscous damping ($d\omega$), pressure-dependent bending torque
($\alpha p^2$), and nonlinear filling/discharge through square-root flow terms.
Its qualitative structure mirrors reduced-order models used in
soft-robot control applications~\cite{yang2024modelbased}.

The state, input and output spaces are defined as
\begin{align*}
  X &= \{x \in \mathbb{R}^3 \mid
    \theta \in [-1.5,\, 1.5],
    \omega \in [-6.0,\, 6.0],
    p \in [0,\, p_s]\},\\
  U &= \{ u \in \mathbb{R} \mid u \in [-1,\, 1] \},\\
  Y &= \{ y \in \mathbb{R} \mid
    y \in [\sin(1.5),\, \sin(1.5)] \approx [-0.997,\, 0.997]\},
\end{align*}
where angles are in radians and $p_s = 5$ is the supply pressure.

We test four Koopman models that share the same $n_z=9$ state observable vector,
defined by the map $\Psi_x: X \to \mathbb{R}^{n_z}$:
\begin{equation}
  z_t := \Psi_x(x_t)
       = \bigl[1,\;\theta,\;\omega,\;p,\;\sin\theta,\;\cos\theta,\;
               p^2,\;\theta\omega,\;\ell\sin\theta\bigr]^\top.
  \label{eq:Psix}
\end{equation}
The dictionary~\eqref{eq:Psix} is constructed to capture the dominant
nonlinear structure of the plant~\eqref{eq:plant}. The output $y = \ell\sin\theta$
is included as $z_9$ so that no reconstruction mapping is required.

Four choices of $\Psi_u: U \to \mathbb{R}^{n_v}$ are compared,
defining $v_t := \Psi_u(u_t)$ as follows: A~--~\emph{Linear}, i.e.,
    the Koopman matrix is identified from the concatenated regressor
    $[z;\, u/\sigma_u]$; B~--~\emph{Bilinear}, i.e.,
    $\Psi_u^{(B)}(u) = [1;\,u]$, yielding an $(n_z{\times}2)$-block product regressor; C~--~\emph{Bilinear and Odd Chebyshev}, i.e.,    $\Psi_u^{(C)}(u) = [1;\,u;\,T_5(u);\,T_7(u);\,T_9(u)]$,   where $T_n$ denotes the $n$-th Chebyshev polynomial of the first kind; D~--~\emph{Bilinear and Tanh bank}, i.e., $\Psi_u^{(D)}(u) = [1;\,u;\,\tanh(4u);\,\tanh(8u)]$, with the input observables explicitly matching the true actuator nonlinearity, but not the actual parameter value.

For models B--D the Koopman matrix $\hat\cK\in\mathbb{R}^{n_z\times n_z n_v}$ is
identified by ridge-regularised least squares on $N_{\mathrm{train}}=20000$
samples:
\begin{equation*}
  K = Z^{+}\Phi^\top\!\left(\Phi\Phi^\top + \lambda I\right)^{-1},
  %\label{eq:ridge}
\end{equation*}
where $\Phi_{:,t} = z_t \otimes v_t$ and $Z^{+}_{:,t}=z_{t+1}$,
consistent with Khatri-Rao EDMD \eqref{eq:d:3}. Input features are standardised by their training-set standard deviation
prior to regression to improve numerical conditioning.
Training data are generated by a mixed multi-sine / PRBS excitation
($60\%$ multi-sine, $40\%$ PRBS, 16 sine components) from initial state $x_0 = [0.2; 0; 0.5]$.

The Koopman MPC controllers solve at each sample $t$:
\begin{equation*}
  \min_{U_t} \sum_{i=1}^{N_h}\!
    \bigl[Q_y(z_{t+i,9}-r_{t+i})^2 + R_u u_{t+i-1}^2
    + R_{\Delta u}(\Delta u_{t+i-1})^2\bigr]
\end{equation*}
subject to $U_t\in U^N$, and the Koopman model state $z_{t+i}$ is predicted using the corresponding A--D Koopman model.
The cost is evaluated in the Koopman-lifted space; no reconstruction is needed
because $z_9 = y$. The MPC parameters are $N_h=15$, $Q_y=550$, $R_u=0.05$, $R_{\Delta u}=1$.~For the nonlinear Koopman MPC problems we use \texttt{fmincon} with SQP, warm-started from the previous solution, while for the linear Koopman MPC we use \texttt{quadprog}.

The open-loop validation input is a $125\,\mathrm{s}$, 16-component
multi-sine signal with frequencies uniformly drawn from
$[0.008,\, 0.64]\,\mathrm{Hz}$, random amplitudes and phases,
normalised to $U$, which is independent of the training
excitation. Closed-loop evaluation runs $N_{\mathrm{sim}}{=}900$ steps ($45\,\mathrm{s}$) on a slow piecewise-constant reference profile with smooth tanh-shaped transitions. Both open- and closed-loop simulations start  at zero initial states.

Table~\ref{tab:rmse_ol} reports the free-run RMSE on the output $y$ over the
entire test horizon. The tanh-bank model~D achieves the lowest error, consistent with its
basis matching the true actuator characteristic. The odd Chebyshev model~C is competitive; the bilinear model~B improves over the linear-in-control baseline~A, but leaves residual error.
\begin{table}[h]
\caption{Open-Loop Free-Run RMSE on Output $y$}
\label{tab:rmse_ol}
\centering
\begin{tabular}{lcc}
\toprule
Model & Input observables & RMSE \\
\midrule
A -- linear & $[u]$                             & 0.315789 \\
B -- bilinear          & $[1;\,u]$                         & 0.253991 \\
C -- bil. + odd Chebyshev     & $[1;\,u;\,T_5;\,T_7;\,T_9]$       & 0.229199 \\
D -- bil. + tanh bank         & $[1;\,u;\,\tanh 4u;\,\tanh 8u]$ & 0.219148 \\
\bottomrule
\end{tabular}
\end{table}
\begin{figure}[!t]
\centerline{\includegraphics[width=1\columnwidth]{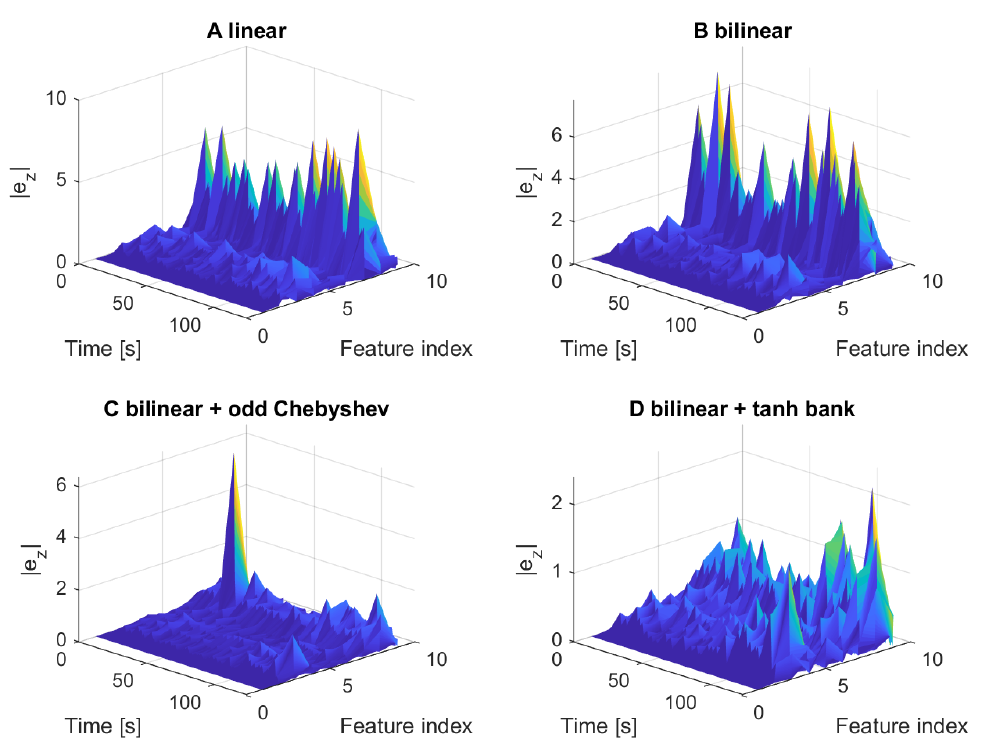}}
\caption{3D absolute error plot across state features for different Koopman embeddings.}
\label{fig:surf_robot}
\end{figure}

Fig.~\ref{fig:surf_robot} shows that the absolute feature error is decreasing with a richer observable dictionary for the input. This shows that for non-input-affine nonlinear systems, going beyond purely bilinear Koopman models can increase accuracy.

Fig.~\ref{fig:closedloop} shows the closed-loop output trajectories for all four Koopman MPC controllers against the reference. Table~\ref{tab:rmse_cl} summarises the closed-loop tracking RMSE, which is consistent with the absolute feature error results.
\begin{table}[h]
\caption{Closed-Loop Tracking RMSE and Median CPU}
\label{tab:rmse_cl}
\centering
\begin{tabular}{lccc}
\toprule
Model & Tracking RMSE & Median CPU [s] \\
\midrule
A -- linear & 0.020783 & 0.000361 \\
B -- bilinear          & 0.016108 & 0.046529 \\
C -- bil. + odd Chebyshev     & 0.011217 & 0.080341 \\
D -- bil. + tanh bank         & 0.007468 & 0.099754 \\
\bottomrule
\end{tabular}
\end{table}
\begin{figure}[t]
  \centering
  \includegraphics[width=0.75\columnwidth]{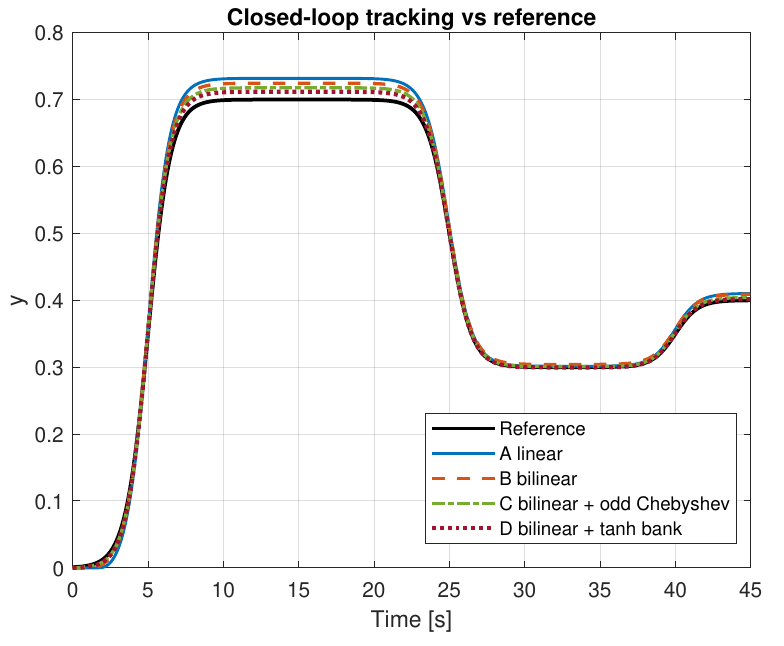}
  \vspace{2pt}
  \includegraphics[width=0.8\columnwidth]{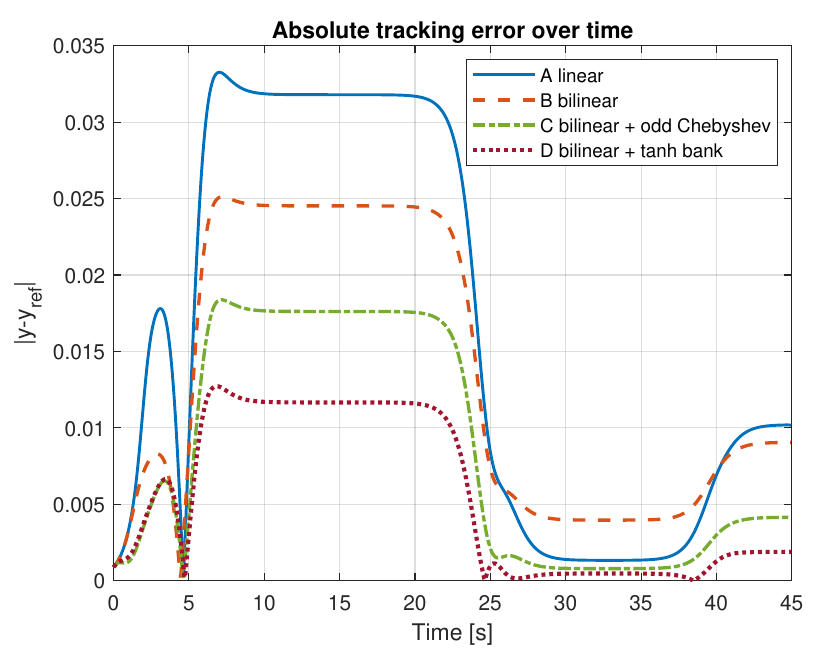}
  \caption{A--D Koopman MPC closed-loop results. Top: reference tracking; Bottom: tracking error.}
  \label{fig:closedloop}
\end{figure}

Note that when the reference comes closer to zero, i.e., $0.3$, a smaller input is required and the input nonlinearity \texttt{tanh} behaves almost linearly. As a result, the linear in $u$ Koopman MPC outperforms the bilinear in $u$ Koopman MPC in this operating range. In contrast, the Koopman MPC based on generalized bilinear models with lifted inputs manages to consistently outperform both the linear and bilinear in $u$ Koopman MPC over the complete tested range of operation.

The per-step CPU times (median averaged over $N_{\mathrm{sim}}{-}1$ calls to \texttt{fmincon/quadprog}) scale roughly with the dimension of the operator. Linear and bilinear in $u$ Koopman MPC implementations remain within real-time requirements of $T_s=0.05\,\mathrm{s}$, while the other Koopman MPC implementations stay close to the required sampling time and could easily reach it via dedicated fast nonlinear MPC solvers (e.g., \texttt{acados}). Efficient numerical implementation is beyond the scope of this paper, however, and it will be considered in future work.

\section{Conclusions}
\label{sec:6}
In this paper we have presented a product Hilbert space approach to generalizing the Koopman operator for nonlinear systems with inputs. We have proven that there exists a well-defined bounded linear operator from the tensor product of two Hilbert spaces spanned by state and input observable functions, respectively, to the state Hilbert space, which yields a generalized, non-measure preserving Koopman operator for systems with inputs. Furthermore, the developed generalized Koopman operator, which leads to an exact infinite-dimensional bilinear model, was utilized to develop a nonlinear fundamental lemma. Methods for data-driven finite-dimensional approximations of the generalized Koopman operator were also indicated along with two illustrative examples that showcased the improved modeling accuracy and control performance of the developed generalized Koopman operator.

%\section*{Acknowledgments}

\section*{References}
\bibliographystyle{IEEEtran}
\bibliography{Mircea}

\appendices
\section{Proof of Theorem~\ref{thm:relaxed}}
\label{a:0}
Since Assumption~\ref{ass:ia_membership} requires
$\psi_{i,x}\circ F \in \mathcal{H}_x\otimes\mathcal{V}_u
= \mathcal{H}$ for all $i\in\Nset$, the invariance
condition~\eqref{eq:3:8} of Theorem~\ref{thm:1} is
satisfied. Applying Theorem~\ref{thm:1} gives existence
of $\cK = Q_xR^{-1}$ such that
$z_{t+1} = \cK(z_t\otimes v_t)$.
Substituting~\eqref{eq:ia_membership} into the
definition $[Q_x]_{i,(k,l)} :=
\langle\psi_{i,x}\circ F,\,\psi_{k,x}\psi_{l,u}
\rangle_{\mathcal{H}}$ and applying Fubini's theorem yields:
\begin{align}
  [Q_x]_{i,(k,l)}
  &= \int_{X\times U}
     \!\!\psi_{i,x}(F(x,u))
     \psi_{k,x}(x)\,\psi_{l,u}(u)
     \;d(\mu_x\otimes\mu_u) \nonumber\\
  &= \langle a_i,\psi_{k,x}\rangle_{\mathcal{H}_x}
     \underbrace{\int_U\!\psi_{0,u}\,\psi_{l,u}\,d\mu_u}_{[R_u]_{0l}}
     \nonumber\\&\;+\;
     \sum_{j=1}^m
     \langle b_{ij},\psi_{k,x}\rangle_{\mathcal{H}_x}
     \underbrace{\int_U\!\psi_{j,u}\,\psi_{l,u}\,d\mu_u}_{[R_u]_{jl}},
  \label{eq:Qx_entry}
\end{align}
where $[R_u]_{0l},[R_u]_{1l},\ldots,[R_u]_{ml}$
are the entries of the \emph{input Gram matrix}
$R_u \in \mathbb{R}^{(m+1)\times(m+1)}$. Since
$\{\psi_{j,u}\}_{j=0}^m$ are linearly independent
in $L^2(U,\mu_u)$ for $U\subseteq\mathbb{R}^m$ of
positive Lebesgue measure
(Definition~\ref{def:psi_u}), $R_u$ is symmetric
positive definite and hence invertible.
The Gram matrix of the product basis
$\{\psi_{k,x}\psi_{l,u}\}$ is
\begin{equation*}
\begin{split}
  [R]_{(i,l),(k,l')}
  \;&=\; \langle\psi_{i,x}\psi_{l,u},\,\psi_{k,x}\psi_{l',u}
        \rangle_{\mathcal{H}}\\
  \;&=\; \langle\psi_{i,x},\psi_{k,x}\rangle_{\mathcal{H}_x}
        \cdot [R_u]_{ll'}
\end{split}
\end{equation*}
by Fubini's theorem, so $R = R_x\otimes R_u$ and
$R^{-1} = R_x^{-1}\otimes R_u^{-1}$. Both Grams are bounded
since $\|R_x^{-1}\|_{\ell^2\to\ell^2}\leq\alpha^{-1}$
and $R_u^{-1}\in\mathbb{R}^{(m+1)\times(m+1)}$ is finite.
From~\eqref{eq:Qx_entry}, it holds that
\begin{equation*}
  [Q_x]_{i,(k,l)}
  \;=\; \langle a_i,\psi_{k,x}\rangle_{\mathcal{H}_x}
        [R_u]_{0l}
        \;+\; \sum_{j=1}^m
              \langle b_{ij},\psi_{k,x}\rangle_{\mathcal{H}_x}
              [R_u]_{jl},
\end{equation*}
or in matrix form:
\begin{equation*}
  Q_x \;=\; A_x\otimes(e_0^\top R_u)
       \;+\; \sum_{j=1}^m
             N_{j,x}\otimes(e_j^\top R_u),
\end{equation*}
where $[A_x]_{ik} :=
\langle a_i,\psi_{k,x}\rangle_{\cH_x}$,
$[N_{j,x}]_{ik} :=
\langle b_{ij},\psi_{k,x}\rangle_{\cH_x}$.
Right-multiplying by $R^{-1} =
R_x^{-1}\otimes R_u^{-1}$ and applying the
mixed product property $(A\otimes B)(C\otimes D) = AC\otimes BD$ yields
\begin{equation*}
\begin{split}
  \cK &= Q_x(R_x^{-1}\otimes R_u^{-1})\\
    &= A_xR_x^{-1}\otimes(e_0^\top R_u R_u^{-1})
    \;+\; \sum_{j=1}^m
          N_{j,x}R_x^{-1}\otimes(e_j^\top R_u R_u^{-1}).
\end{split}
\end{equation*}
Since $R_uR_u^{-1} = I_{m+1}$, we have
$e_j^\top R_uR_u^{-1} = e_j^\top$, so the
$j=0$ block gives $\cA: = A_xR_x^{-1}$ and
each $j\geq 1$ block gives $\cN_j := N_{j,x}R_x^{-1}$,
yielding the factorization $\cK = \cA\otimes e_0^\top
+ \sum_{j=1}^m\cN_j\otimes e_j^\top$.
Since $v_t = \operatorname{col}(\psi_{0,u}(u_t),\ldots,\psi_{m,u}(u_t))
= \operatorname{col}(1,u_{1,t},\ldots,u_{m,t})$
(Definition~\ref{def:psi_u}), we have
$z_t\otimes v_t =
\operatorname{col}(z_t,\,u_{1,t}z_t,\,\ldots,\,u_{m,t}z_t)$,
and hence
\begin{equation*}
  z_{t+1} \;=\; \cK(z_t\otimes v_t)
  \;=\; \cA z_t + \sum_{j=1}^m \cN_jz_t\,u_{j,t}.
\end{equation*}
Boundedness of $\cA = A_xR_x^{-1}$ and
$\cN_j = N_{j,x}R_x^{-1}$ follows from
$\|R_x^{-1}\|_{\ell^2\to\ell^2}\leq\alpha^{-1}$
and boundedness of $A_x$, $N_{j,x}$, where each row
$(\langle a_i,\psi_{k,x}\rangle_{\cH_x})_{k=1}^\infty
\in\ell^2$ and
$(\langle b_{ij},\psi_{k,x}\rangle_{\cH_x})_{k=1}^\infty
\in\ell^2$ by Bessel's inequality
(Remark~\ref{rem:K_bounded} with $\cH_u = \cV_u$).
Boundedness of $\cK$ then follows from the factorization
$\cK = \cA\otimes e_0^\top +
\sum_{j=1}^m\cN_j\otimes e_j^\top$.
\section{Proof of Proposition~\ref{prop:1y}}
\label{a:1}
Since each $\psi_{i,y}\in\cH_y$, $i\in\Nset$, we can expand each $\psi_{i,y}$ as $\psi_{i,y}=\sum_{j=1}^\infty\langle\psi_{i,y},\phi_j\rangle_{\cH_y}\phi_j$. Then we have that
\begin{equation}
\label{eq:4:5}
\begin{split}
\Psi_y\circ h=\begin{bmatrix}\psi_{1,y}\circ h\\\psi_{2,y}\circ h\\\vdots\end{bmatrix}&=\begin{bmatrix}\sum_{j=1}^\infty\langle\psi_{1,y},\phi_j\rangle_{\cH_y}\phi_j\circ h\\\sum_{j=1}^\infty\langle\psi_{2,y},\phi_j\rangle_{\cH_y}\phi_j\circ h\\\vdots\end{bmatrix}\\
&=\sum_{j=1}^\infty\begin{bmatrix}\langle\psi_{1,y},\phi_j\rangle_{\cH_y}\\\langle\psi_{2,y},\phi_j\rangle_{\cH_y}\\\vdots\end{bmatrix}\phi_j\circ h.
\end{split}
\end{equation}
Since $\phi_j\circ h\in\cH_x$ for $j\in\Nset$ and $\{\psi_{i,x}\}_{i=1}^\infty$ is an orthonormal basis in $\cH_x$, we can expand $\phi_j\circ h$ as
\begin{equation}
\label{eq:4:6}
\phi_j\circ h = \sum_{i=1}^\infty\langle\phi_j\circ h,\psi_{i,x}\rangle_{\cH_x}\psi_{i,x}.
\end{equation}
Substituting \eqref{eq:4:6} in \eqref{eq:4:5} yields
\[
\begin{split}
&\Psi_y[h(x)]=\sum_{j=1}^\infty\begin{bmatrix}\langle\psi_{1,y},\phi_j\rangle_{\cH_y}\\\langle\psi_{2,y},\phi_j\rangle_{\cH_y}\\\vdots\end{bmatrix}\phi_j\circ h(x)\\
&=\sum_{j=1}^\infty\begin{bmatrix}\langle\psi_{1,y},\phi_j\rangle_{\cH_y}\\\langle\psi_{2,y},\phi_j\rangle_{\cH_y}\\\vdots\end{bmatrix}\sum_{i=1}^\infty\langle\phi_j\circ h,\psi_{i,x}\rangle_{\cH_x}\psi_{i,x}(x)=\\
&\sum_{j=1}^\infty\begin{bmatrix}\langle\psi_{1,y},\phi_j\rangle_{\cH_y}\\\langle\psi_{2,y},\phi_j\rangle_{\cH_y}\\\vdots\end{bmatrix}[\langle\phi_j\circ h, \psi_{1,x}\rangle_{\cH_x}, \langle\phi_j\circ h, \psi_{2,x}\rangle_{\cH_x}, \ldots]\cdot\\&\cdot\begin{bmatrix}\psi_{1,x}(x)\\\psi_{2,x}(x)\\\vdots\end{bmatrix}=\cC\Psi_x(x).
\end{split}
\]
\section{Proof of Theorem~\ref{thm:1y}}
\label{a:2}
From \eqref{eq:3:cy} we obtain
\[\begin{bmatrix}\phi_1\circ h\\\phi_2\circ h\\\vdots\end{bmatrix}=T_y^{-1}\begin{bmatrix}\psi_{1,y}\circ h\\\psi_{2,y}\circ h\\\vdots\end{bmatrix},\]
which, together with \eqref{eq:3:8y} implies $\phi_i\circ h\in\cH_x$ for all $i\in\Nset$. Then, from Proposition~\ref{prop:1y} we have that there exists a matrix $\cC$ such that \eqref{eq:3:9y} holds. Multiplying both sides in \eqref{eq:3:9y} with a row vector function $[\psi_{1,x},\psi_{2,x},\ldots]$ and integrating each element of the resulting matrices of functions over $X$, via a similar reasoning as in the proof of Theorem~\ref{thm:1}, we obtain
\[\begin{split}
&\begin{bmatrix}\langle\psi_{1,y}\circ h,\psi_{1,x}\rangle_{\cH_x} &\langle\psi_{1,y}\circ h,\psi_{2,x}\rangle_{\cH_x}&\ldots\\\langle\psi_{2,y}\circ h,\psi_{1,x}\rangle_{\cH_x}&\langle\psi_{2,y}\circ h,\psi_{2,x}\rangle_{\cH_x}&\ldots\\\vdots&\vdots&\ddots\end{bmatrix}\\&\quad =\cC\begin{bmatrix}\langle\psi_{1,x},\psi_{1,x}\rangle_{\cH_x} &\langle\psi_{1,x},\psi_{2,x}\rangle_{\cH_x}&\ldots\\\langle\psi_{2,x},\psi_{1,x}\rangle_{\cH_x}&\langle\psi_{2,x},\psi_{2,x}\rangle_{\cH_x}&\ldots\\\vdots&\vdots&\ddots\end{bmatrix},
\end{split}
\]
and thus, $Q_y=\cC R_x$. Since $\{\psi_{i,x}\}_{i=1}^\infty$ is a Riesz basis in $\cH_x$ and thus, its Gram matrix $R_x$ is invertible, yields $\cC=Q_yR_x^{-1}$.

\begin{IEEEbiography}[{\includegraphics[width=1in,height=1.25in,clip,keepaspectratio]{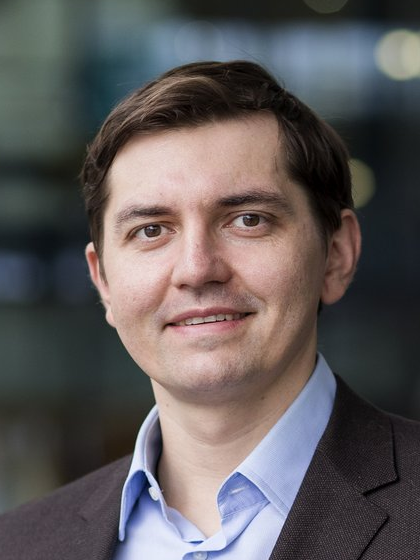}}]{Mircea Lazar}
	was born on March $\text{4}^{\text{th}}$, 1978 in Iași, Romania and is an Associate Professor in Constrained Control of Complex Systems at the Department of Electrical Engineering, Eindhoven University of Technology, Eindhoven, the Netherlands. His research interests cover stability theory, Lyapunov functions, artificial intelligence for prediction and control, and constrained control of non-linear and hybrid systems, including model predictive control. His research is driven by control problems in power systems, power electronics, high-precision mechatronics, automotive and biological systems. Dr. Lazar received the European Embedded Control Institute PhD Award in 2007 for his PhD dissertation. He supervised 15 PhD researchers (2 received the cum laude distinction). Dr. Lazar is currently appointed as Chair of the IFAC Technical Committee (TC) 2.3 Non-Linear Control Systems and as Associate Editor of IEEE Transactions on Automatic Control.
\end{IEEEbiography}

\end{document}